\documentclass[11pt,a4paper]{article}

\usepackage[utf8]{inputenc}
\usepackage[T1]{fontenc}
\usepackage[english]{babel}
\usepackage[sc]{mathpazo}          
\usepackage{microtype}

\usepackage{amsmath}
\usepackage{amsthm}
\usepackage{amsfonts}
\usepackage{amssymb}
\usepackage{bm}
\usepackage{graphicx}
\usepackage{caption}
\usepackage{subcaption}
\usepackage{booktabs}
\usepackage{multirow}
\usepackage{siunitx}
\usepackage{enumitem}

\usepackage[a4paper,
            top=2.7cm, bottom=3.0cm,
            left=2.5cm, right=2.5cm,
            headsep=0.9cm]{geometry}

\usepackage{xcolor}
\usepackage[skins]{tcolorbox}
\definecolor{accent}{RGB}{27, 54, 93}     
\definecolor{rulegray}{RGB}{110, 110, 110}
\definecolor{citegreen}{RGB}{0, 82, 82}

\usepackage{titlesec}
\titleformat{\section}
  {\Large\bfseries}
  {\textcolor{accent}{\thesection}}{0.85em}{}
\titleformat{\subsection}
  {\large\bfseries}
  {\textcolor{accent}{\thesubsection}}{0.85em}{}
\titleformat{\subsubsection}
  {\normalsize\bfseries}
  {\textcolor{accent}{\thesubsubsection}}{0.85em}{}
\titlespacing*{\section}{0pt}{2.1ex plus .6ex minus .2ex}{1.3ex plus .3ex}
\titlespacing*{\subsection}{0pt}{1.8ex plus .5ex minus .2ex}{1.0ex plus .2ex}

\usepackage{fancyhdr}
\fancypagestyle{firstpage}{%
  \fancyhf{}%
  \fancyfoot[C]{\footnotesize\scshape\color{rulegray} Preprint \,\textperiodcentered\, July 2026}%
}

\newtheoremstyle{accentplain}%
  {0.9em}{0.9em}%
  {\itshape}{}%
  {\bfseries\color{accent}}{.}{ }{}
\newtheoremstyle{accentdefinition}%
  {0.9em}{0.9em}%
  {\normalfont}{}%
  {\bfseries\color{accent}}{.}{ }{}
\theoremstyle{accentplain}
\newtheorem{theorem}{Theorem}[section]

\newtheorem{proposition}[theorem]{Proposition}
\newtheorem{remark}[theorem]{Remark}

\theoremstyle{accentdefinition}
\newtheorem{test}{Test}[section]

\newcommand{\PP}{{\mathbb P}}

\newcommand{\NN}{\mathbb{N}}

\newcommand{\R}{\mathcal{R}}
\newcommand{\C}{\mathcal{C}}
\newcommand{\V}{\mathcal{V}}

\newcommand{\bigO}{\mathcal{O}}
\newcommand{\m}{{m^{\hspace{.05cm}\mbox{\large $\bot$}}}}

\DeclareMathOperator{\diag}{diag}

\usepackage[numbers,sort&compress]{natbib}
\newcommand{\doi}[1]{doi:\,\href{https://doi.org/#1}{\nolinkurl{#1}}}
\usepackage[colorlinks=true,
            linkcolor=accent,
            citecolor=citegreen,
            urlcolor=accent]{hyperref}
\usepackage{orcidlink}

\newcommand{\affil}[2]{\textsuperscript{\normalfont#1}\,#2}
\newcommand{\authormark}[1]{\textsuperscript{\normalfont#1}}

\begin{document}

\pagestyle{fancy}
\thispagestyle{firstpage}

\begin{center}
    {\color{accent}\rule{\textwidth}{1.1pt}}\\[-2.5pt]
    {\color{accent}\rule{\textwidth}{0.4pt}}

    \vspace{1.6em}
    {\LARGE\bfseries A Rational Discrete Collocation Method\\[0.35em]
     for Second Kind Fredholm Equations\par}

    \vspace{1.7em}
    {\large
     Domenico Mezzanotte\,\orcidlink{0000-0001-5154-6538}\,\authormark{a,b} \qquad
     Donatella Occorsio\,\orcidlink{0000-0001-9446-4452}\,\authormark{a,b}\\[0.45em]
     Mario Pezzella\,\orcidlink{0000-0002-1869-945X}\,\authormark{b,c,\textasteriskcentered} \qquad
     Woula Themistoclakis\,\orcidlink{0000-0002-6185-1154}\,\authormark{b}\par}

    \vspace{1.2em}
    {\small\itshape
     \affil{a}{Department of Basic and Applied Sciences, University of Basilicata,\\
               Via dell'Ateneo Lucano 10, 85100 Potenza, Italy}\\[0.3em]
     \affil{b}{Institute for Applied Mathematics ``Mauro Picone'', National Research Council of Italy,\\
               Via Pietro Castellino 111, 80131 Naples, Italy}\\[0.3em]
     \affil{c}{Department of Mathematics and Applications ``Renato Caccioppoli'',\\
               University of Naples Federico II, Via Cintia, 80126 Naples, Italy}\par}

    \vspace{0.9em}
    {\textasteriskcentered\;Corresponding author:\;
     {\ttfamily\href{mailto:mario.pezzella@cnr.it}{mario.pezzella@cnr.it}}
     \,\textperiodcentered\,
     {\ttfamily\href{mailto:mario.pezzella@unina.it}{mario.pezzella@unina.it}}\par}

    \vspace{1.2em}
    {\color{accent}\rule{0.35\textwidth}{0.4pt}}
\end{center}

\vspace{0.6em}

\begin{center}
\begin{tcolorbox}[enhanced, width=0.94\textwidth,
                  colback=accent!5, colframe=accent!5,
                  boxrule=0pt, sharp corners,
                  borderline west={1.6pt}{0pt}{accent},
                  left=10pt, right=10pt, top=9pt, bottom=9pt]
    {\small
    \noindent{\bfseries\color{accent} Abstract.}
    In this work we present a novel discrete collocation method for the numerical solution of Fredholm integral equations of the second kind in the space of continuous functions equipped with the uniform norm. The method is based on a rational interpolation scheme recently developed within the general framework of reproducing kernel Hilbert spaces. This rational approximation has no real poles, interpolates the target function at arbitrary Jacobi nodes and exhibits uniformly bounded Lebesgue constants. Moreover, it converges uniformly for all continuous functions at a rate at least equal to that of the best uniform polynomial approximation. These interesting properties are inherited by the resulting numerical method, for which stability, convergence and good conditioning are established under minimal assumptions on the integral kernel. A series of numerical experiments confirm the theoretical findings and indicate that, in the presence of particularly challenging kernels, the proposed approach provides a robust and effective alternative to Nystr\"om-type methods.

    \vspace{0.9em}
    \noindent{\bfseries\color{accent} Keywords.}
    Fredholm integral equations \,\textperiodcentered\, Rational interpolation \,\textperiodcentered\, de la Vall\'ee Poussin interpolation \,\textperiodcentered\, Discrete collocation \,\textperiodcentered\, Near--best approximation.
    }
\end{tcolorbox}
\end{center}

\vspace{0.8em}


\section{Introduction}\label{sec:Intro}
In \cite{Th-Barel}, a novel linear rational approximation process on $[-1,1]$ has been introduced as an example within the general setting of reproducing kernel Hilbert spaces. 
More precisely, by selecting Darboux kernels associated with an arbitrary Jacobi weight, a rational approximant $R_n^mf$ is built, which is free of real poles and interpolates the target function $f$ at the corresponding Jacobi zeros. This rational interpolant has the peculiarity of depending on two integer parameters $0<m<n$ where $n$ determines the number $n+1$ of nodes and $m$ is a free parameter, which can be appropriately tuned to improve the local approximation properties, especially in the presence of isolated singularities. Moreover,  as $m\sim n \to\infty$, $R_n^mf$ exhibits uniformly bounded Lebesgue constants and uniformly converges to $f$ in the space of continuous functions $C^0=C[-1,1]$, in contrast to classical Lagrange polynomial interpolation at the same Jacobi zeros. We recall that de la Vallée Poussin type approximation \cite{Th-1999, Woula_Uniform, Woula_Weighted, TB-GVP} also exhibits the aforementioned properties, but only for certain Jacobi weights, while the rational interpolant $R_n^m$ satisfies them with no restrictions on the weight.

The objective of this work consists of testing the performance of such rational interpolation process when applied  to the numerical solution of the second kind Fredholm Integral Equations (FIEs) of the following form
\begin{equation}\label{FIE}
    f(x)-\int_{-1}^1h(x,y)f(y) \, w(y) dy=g(x), \qquad |x|\leq1,
\end{equation}
where $g$ and $h$ are known functions, $w(y) =v^{\alpha,\beta}(y):= (1-y)^\alpha(1+y)^\beta$ is a given Jacobi weight of parameters $\alpha,\beta>-1,$ and $f$ denotes the unknown function.

Our investigation is conducted assuming that the homogeneous equation corresponding to \eqref{FIE} has only the trivial solution, and supposing that the integral kernel $h$ is a bounded function satisfying the following conditions
\begin{align}
\label{hp1-h}
    & \int_{-1}^1 |h(x,y)|w(y)dy<\infty, \qquad \forall x\in [-1,1],\\
    \label{hp2-h}
    & \lim_{x^\prime\to x}\int_{-1}^1 |h(x^\prime,y)-h(x,y)|w(y)dy=0, \qquad \forall x\in [-1,1],
\end{align}
As discussed in \cite{Gra-Sloan}, conditions \eqref{hp1-h}--\eqref{hp2-h} guarantee the compactness of the operator $H:C^0\to C^0$ defined by
\begin{equation}\label{H}
    Hf(x)=\int_{-1}^1 h(x,y)f(y)w(y)dy, \qquad |x|\leq 1,
\end{equation}
so that, by Fredholm alternative,  for all $g\in C^0,$ there exists a unique solution $f^*\in C^0$ to \eqref{FIE}. 

The numerical approximation of such solution has attracted significant attention in the literature, resulting in a wide variety of methods (see, for instance, \cite{Atkinson_1997,Kress_2014}). One of the most widely used approaches is the Nyström method, whose flexibility has led to numerous extensions and variants in recent years, including averaged Nyström interpolants \cite{Luisa_Avg}, multidimensional formulations \cite{De_Bonis_2008,Laguardia_2023,Multi_Dim_Nystrom}, and schemes for highly oscillatory kernels \cite{Fermo2023NystromVF,Oscillating_Nystrom}. In its classical formulation, the Nystr\"om method applies a quadrature rule to the integral term in \eqref{FIE} and computes the approximate solution by the Nystr\"om interpolant based on the solution of a linear system achieved by collocation at the quadrature nodes. As is well--known, the error of the resulting approximation is governed by the accuracy of the underlying quadrature rule, whose choice plays a crucial role in determining the overall performance of the method.

The Nystr\"om method based on Gauss–Jacobi quadrature rule is known to provide high accuracy for smooth kernels and solutions \cite{Gautschi}. Recently, the Nystr\"om approach has been extended to include anti Gauss \cite{Patricia,Laurie_1996}, averaged Gauss and weighted averaged Gauss \cite{Luisa_Avg,SpalevicI,SpalevicII} and spline-based \cite{Spline_Nystrom} quadrature rules, achieving better performance. However, the reduced regularity of the solution, the presence of singularities in the known terms, as well as highly oscillatory kernels may significantly deteriorate the convergence rate of the previous methods, which provide a poor approximation in such \emph{critical cases}. The present study is motivated by the need for a robust and effective alternative for addressing these challenging scenarios.

Using the classical projection-collocation scheme, the application of $R_n^m$ yields a Rational Collocation (RC) method which is stable, well--conditioned and convergent. Nevertheless, the associated linear system involves the exact evaluation of certain integrals, whose computation may be infeasible or prohibitively demanding.  To overcome this problem, we construct a Rational Discrete Collocation (RDC) method which, besides $R_n^m$, also employs the de la Vall\'ee Poussin interpolation polynomial based on the Chebyshev nodes of the first kind \cite{Woula_Donatella_Cheb_1D, OT-DRNA21, Woula_Donatella}.  
By combining uniform convergence, bounded Lebesgue constants, and tunable local approximation via the parameter $m$, we prove that RDC method provides stable and uniformly convergent approximations under minimal assumptions.  Moreover, several numerical tests demonstrate its superior performance, compared to Nystr\"om type methods, in the presence of isolated singularities or highly oscillatory kernels. 

The paper is organized as follows. Section \ref{sec:Rational_Interpolation} reviews the main results on rational interpolation at Jacobi zeros. Section \ref{sec:RC_Method} introduces the RC method and discusses its convergence, stability and limitations. A Modified Nystr\"om (MN) method based on the rational operator is there introduced, as well. To overcome the drawbacks of both the RC and MN schemes, the RDC framework is presented in Section \ref{sec:RDC_Method}. Numerical examples are reported in Section \ref{sec:Numerical_Tests}. Finally, Section \ref{sec:conclusions} concludes the paper with some remarks and directions for future research.

\section{Rational interpolation at Jacobi zeros}\label{sec:Rational_Interpolation}
We recall that, given $n\in\NN_0$ and a Jacobi weight $w(y) = (1-y)^\alpha(1+y)^\beta,$ the corresponding Darboux kernel is defined as
\begin{equation*}
    k_n(x,y)=\sum_{j=0}^n p_j(x)p_j(y), \qquad x,y\in [-1,1],
\end{equation*}
where $p_j$ denotes the orthonormal Jacobi polynomial of degree $j$, with positive leading coefficient, associated with the weight $w$. As is well--known,  $k_n$ is a reproducing kernel in the space $\PP_n$ of polynomials of degree at most $n$, that is,
\begin{equation}\label{inva}
    \int_{-1}^1k_n(x,y)P(y)w(y)dy=P(x), \quad |x|\leq 1, \qquad \forall P\in\PP_n.
\end{equation}
Let $x_i=x_{n,i}(w)$,  $i=1,\ldots, n+1$, denote the zeros of $p_{n+1}$, and $\lambda_i=\lambda_{n,i}(w)=1/k_n(x_i,x_i)$ be the corresponding Christoffel numbers. By the well-known Christoffel--Darboux formula, we get
\begin{equation}\label{interp-k}
    \lambda_i k_n(x_i,x_j)=\delta_{i,j},\qquad i,j=1,\ldots, n+1,
\end{equation}
where $\delta_{i,j}$ is the Kronecker delta.
Furthermore, recalling the Gauss quadrature rule
\begin{equation}\label{Gauss}
    \int_{-1}^1 g(y)w(y)\,dy=\sum_{i=1}^{n+1}\lambda_i g(x_i), \qquad \forall g\in\PP_{2n+1},
\end{equation}
from the reproducing property \eqref{inva}, we obtain
\begin{equation}\label{inva1}
    \sum_{i=1}^{n+1}\lambda_i k_n(x,x_i)P(x_i)=P(x), \qquad |x|\le1,\quad \forall P\in\PP_n.
\end{equation}
In \cite[Example 1]{Th-Barel}, for any integers $m$ and $n$ satisfying $0<m<n$, the Darboux kernels associated with an arbitrary Jacobi weight $w$ are used to define the rational functions
\begin{align}\label{rnm}
    r_{n,i}^m(x)&=\lambda_i \frac{k_n(x_i, x)k_m(x_i,x)}{k_m(x,x)},\qquad i=1,\ldots, n+1, \qquad |x|\leq 1,\\
    \label{Rnm}
    R_n^mf(x)&=\sum_{i=1}^{n+1}f(x_i)r_{n,i}^m(x),\qquad \qquad |x|\leq 1.
\end{align}
Some examples of the basis functions in \eqref{rnm} are displayed in Figures \ref{fig:Ar_Interpol_Rational_Basis_Plot} and \ref{fig:Ar_Rational_Basis_Plot}, where their behavior is analyzed with respect each parameter's variation. In particular, we observe that increasing $n$  enhances the localization of $r_{n,i}^m(x)$ around the node $x_i$, while increasing $m$ attenuates its oscillating behavior.
\begin{figure}[htbp]
    \centering
    \includegraphics[width=0.8\linewidth]{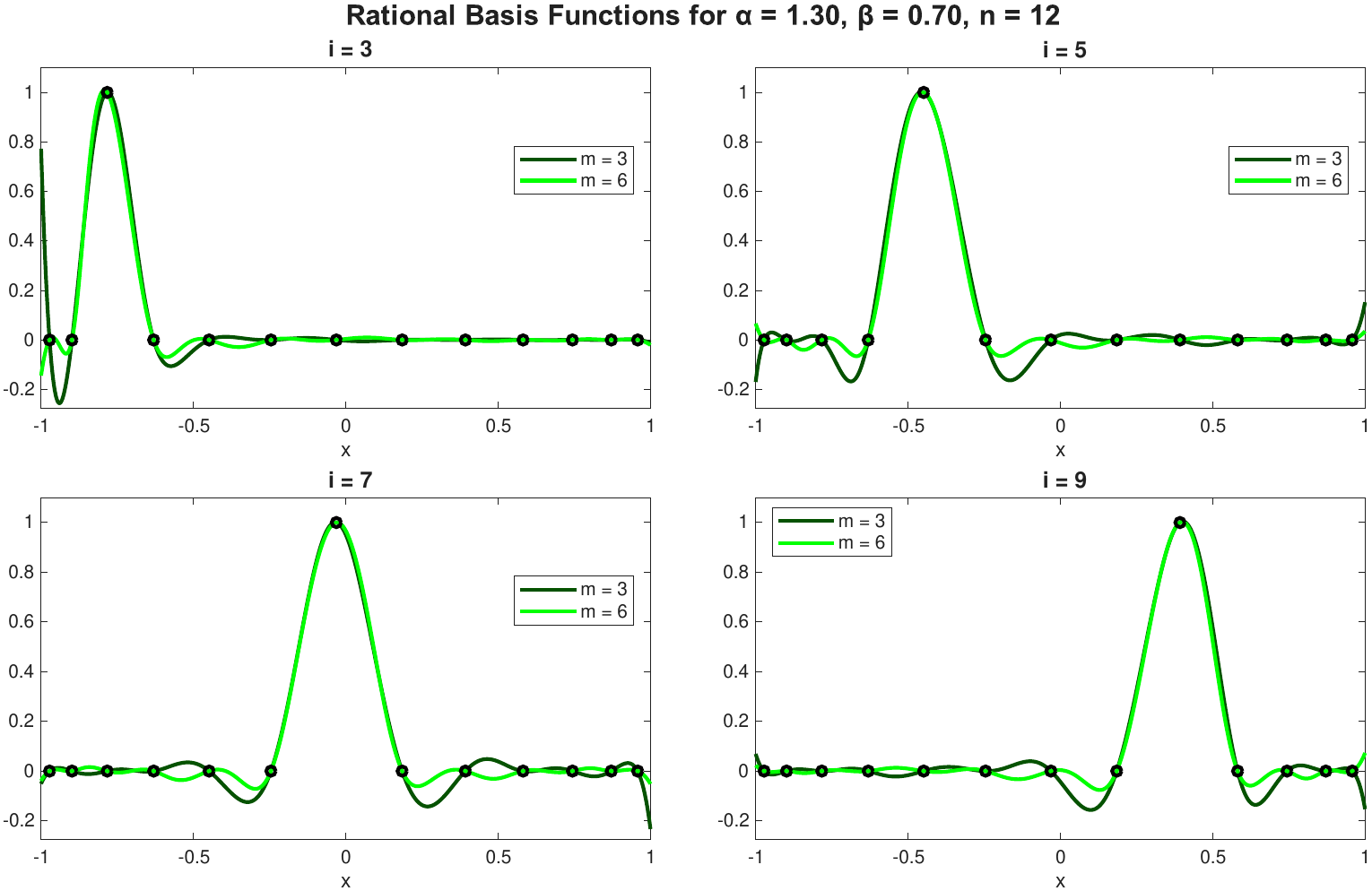}
    \caption{Rational basis functions $r_{n,i}^m(x)$ for $n=12$ and $m\in\{3,6\}$. Black circles indicate the Jacobi nodes associated with the weight $w(x)=(1-x)^{13/10}(1+x)^{7/10}$.}
  \label{fig:Ar_Interpol_Rational_Basis_Plot}
\end{figure}
\begin{figure}[htbp]
    \centering
    \includegraphics[width=0.8\linewidth]{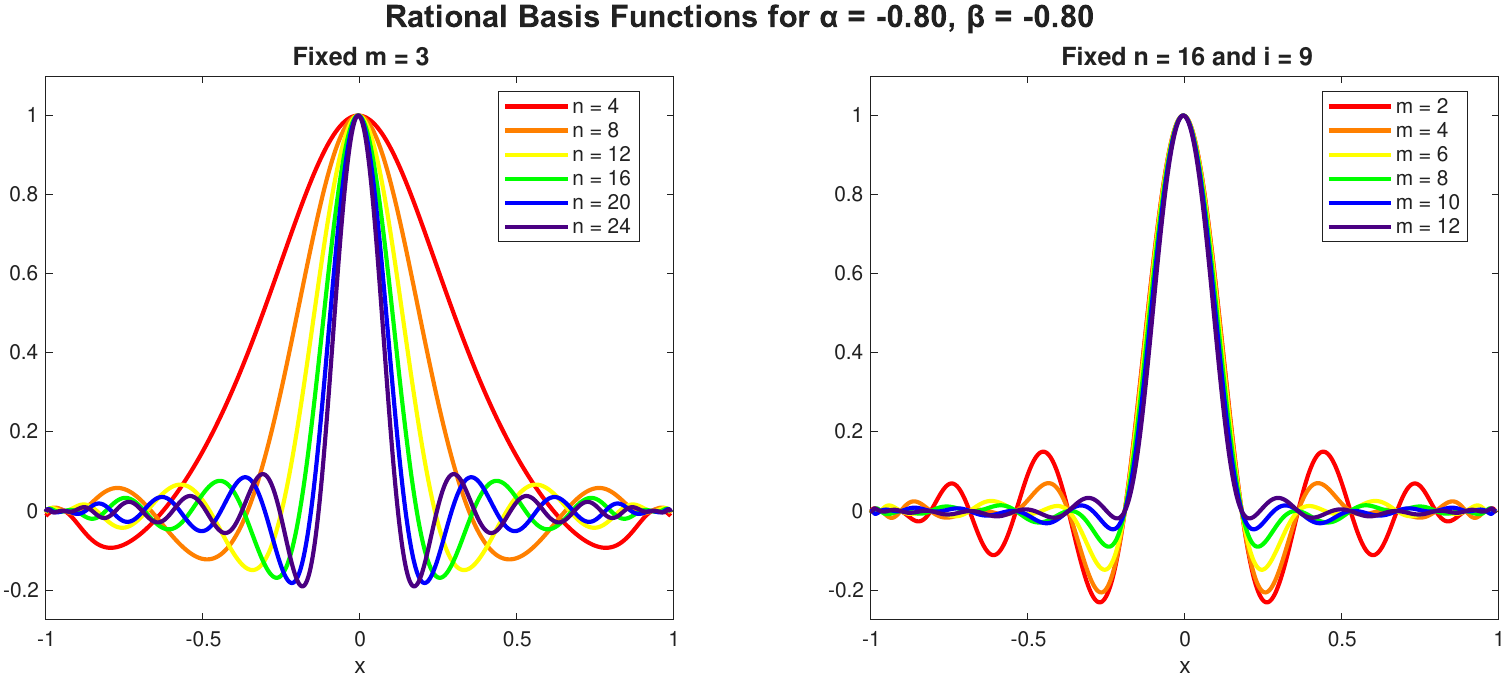}
    \caption{Rational basis functions $r_{n,i}^m(x)$ for the Jacobi weight $w(x)=(1-x)^{-4/5}(1+x)^{-4/5}$. Left: basis functions associated with the Jacobi zero closest to the origin for fixed $m=3$ and increasing $n$. Right: basis functions corresponding to the ninth Jacobi node for fixed $n=16$ and increasing values of $m$.}
  \label{fig:Ar_Rational_Basis_Plot}
\end{figure}

As displayed in Figure \ref{fig:Ar_Interpol_Rational_Basis_Plot}, we have the following interpolation property that trivially follows from \eqref{interp-k}.
\begin{proposition}\label{prop-int}
    For any Jacobi weight $w$, any pair of integers $0<m<n$, and any function $f$ defined on $(-1,1)$, we have
    \begin{align}\label{interp-r}
        r_{n,i}^m(x_j)&=\delta_{i,j}, \, \qquad i,j=1,\ldots, n+1,\\
        \label{interp}
        R_n^mf(x_j)&=f(x_j),\qquad j=1,\ldots,n+1.
    \end{align}
\end{proposition}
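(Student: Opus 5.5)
The plan is to evaluate the basis function \eqref{rnm} directly at a node $x_j$ and split into the cases $j\neq i$ and $j=i$, using only the discrete orthogonality relation \eqref{interp-k}. The identity \eqref{interp} will then follow by linearity.

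First I need the denominator $k_m(x,x)$ to be nonzero, so that $r_{n,i}^m$ is well defined at every point. Indeed,
\[
k_m(x,x)=\sum_{j=0}^m p_j(x)^2\ge p_0(x)^2=p_0^2>0,
\]
since $p_0$ is a nonzero constant. Hence $r_{n,i}^m(x_j)$ is simply the value of the formula at $x=x_j$:
\[
r_{n,i}^m(x_j)=\lambda_i k_n(x_i,x_j)\,\frac{k_m(x_i,x_j)}{k_m(x_j,x_j)} .
\]

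For $j\neq i$, \eqref{interp-k} gives $\lambda_i k_n(x_i,x_j)=0$. The remaining factor is finite by the positivity above, so $r_{n,i}^m(x_j)=0$.

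For $j=i$, \eqref{interp-k} gives $\lambda_i k_n(x_i,x_i)=1$, and the quotient becomes $k_m(x_i,x_i)/k_m(x_i,x_i)=1$. Therefore $r_{n,i}^m(x_i)=1$, which proves \eqref{interp-r}.

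For \eqref{interp}, I substitute \eqref{interp-r} into \eqref{Rnm}:
\[
R_n^mf(x_j)=\sum_{i=1}^{n+1}f(x_i)\,\delta_{i,j}=f(x_j).
\]
This uses only that $f$ is defined at the nodes, and all nodes lie in $(-1,1)$ because they are zeros of $p_{n+1}$.

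The only step needing care is the nonvanishing of the denominator $k_m(x,x)$. Without it, the cancellation in the diagonal case $j=i$ and the vanishing in the off-diagonal case would not be justified. The bound $k_m(x,x)\ge p_0^2>0$ settles this, and it also confirms that $r_{n,i}^m$ has no real poles.
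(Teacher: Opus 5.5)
Your proof is correct and follows the paper's route. The paper simply states that \eqref{interp-r}--\eqref{interp} follow trivially from the discrete relation \eqref{interp-k}. You carry out that deduction and also make explicit the positivity $k_m(x,x)\ge p_0^2>0$ of the denominator, which the paper leaves implicit.
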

Furthermore, the reproducing property \eqref{inva1} directly leads to the following invariance result.
\begin{proposition}\label{prop-inva}
    For any Jacobi weights $w$ and any pair of integers  $0<m<n$, we have
    \begin{equation}\label{inva3}
    R_n^mP(x)=P(x), \qquad |x|\leq 1,\qquad \forall P\in\PP_{n-m}.
    \end{equation}
\end{proposition}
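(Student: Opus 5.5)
Fix $x\in[-1,1]$ and $P\in\PP_{n-m}$. The idea is to write $R_n^mP(x)$ as a Gauss sum and recognise it, via the reproducing property \eqref{inva1}, as an evaluation of an auxiliary polynomial of degree at most $n$. By \eqref{rnm}--\eqref{Rnm},
\begin{equation*}
R_n^mP(x)=\frac{1}{k_m(x,x)}\sum_{i=1}^{n+1}\lambda_i\, k_n(x,x_i)\,\bigl[k_m(x_i,x)P(x_i)\bigr],
\end{equation*}
where we used the symmetry $k_n(x_i,x)=k_n(x,x_i)$. The division by $k_m(x,x)$ is legitimate because $k_m(x,x)=\sum_{j=0}^m p_j(x)^2\ge p_0(x)^2>0$ for every $x\in[-1,1]$, since $p_0$ is a nonzero constant.

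Next, with $x$ fixed, define the polynomial in the variable $y$
\begin{equation*}
Q_x(y):=k_m(y,x)P(y)=\sum_{j=0}^m p_j(x)\,p_j(y)\,P(y).
\end{equation*}
Its degree in $y$ is at most $m+(n-m)=n$, so $Q_x\in\PP_n$. This is exactly where the hypothesis $P\in\PP_{n-m}$ enters.

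Applying \eqref{inva1} to $Q_x$ and evaluating at the point $x$ itself gives
\begin{equation*}
\sum_{i=1}^{n+1}\lambda_i\, k_n(x,x_i)\,Q_x(x_i)=Q_x(x)=k_m(x,x)P(x).
\end{equation*}
Dividing by $k_m(x,x)$ yields $R_n^mP(x)=P(x)$. Since $x\in[-1,1]$ was arbitrary, \eqref{inva3} follows.

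The only subtle point is that $Q_x$ depends on the evaluation point $x$. This causes no difficulty: \eqref{inva1} holds for every polynomial in $\PP_n$ and every point of $[-1,1]$, so we may fix $x$ first, apply the identity to $Q_x$, and then specialise the evaluation point to that same $x$. No further estimates are needed.
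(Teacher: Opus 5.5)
Your proof is correct and follows essentially the same route as the paper, which obtains this invariance directly from the reproducing identity \eqref{inva1} applied to the polynomial $y\mapsto k_m(x,y)P(y)\in\PP_n$ (the same device reappears in \eqref{inva-Pi} and \eqref{Hnmj}). Your remark that $k_m(x,x)\ge p_0^2>0$ justifies the division by $k_m(x,x)$, a point the paper leaves implicit.
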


Let us introduce the rational functions space
\begin{equation}\label{R_n^m}
    \R_n^m=\operatorname{span}\{r_{n,i}^m(x), \ i=1,\ldots,n+1\}, \qquad 0<m<n.
\end{equation}
Since \eqref{interp-r} ensures the linear independence of the rational basis functions in \eqref{rnm}, then $\mbox{dim} R_n^m=n+1$. Moreover,  by virtue of \eqref{inva3},  $\PP_{n-m}\subset R_n^m$. Furthermore, the interpolation property \eqref{interp} implies that 
\begin{equation}\label{proj}
    f\in\R_n^m
    \quad\Longleftrightarrow\quad
    R_n^mf=f,
\end{equation}
and therefore the linear operator
$R_n^m:f\to R_n^mf$ defined by \eqref{Rnm} is a projection onto $\R_n^m$. Its stability is strictly related to the behavior of the Lebesgue constant
\begin{equation}\label{LC-Sum}
    \|R_n^m\|=\sup_{|x|\leq 1}\sum_{i=1}^{n+1}|r_{n,i}^m(x)|, \qquad 0<m<n,
\end{equation}
which is precisely the operator norm of the map $R_n^m:C^0\to C^0$, being $C^0=C[-1,1]$ the Banach space of continuous functions endowed with the uniform norm $\|f\|_\infty=\max_{|x|\leq 1}|f(x)|.$ 

Throughout the paper, we denote by $\C$ a positive constant that can take different values at different occurrences, writing $\C\ne \C(a,b,..)$ in the case that $\C$ is independent of the parameters $a,b,...$. Moreover, if $A,B>0$ depend on some parameters, then the notation $A\sim B$ indicates that there exists an absolute constant $\C>0$, independent of these parameters, such that
$\C^{-1}A\leq B\leq \C A$ holds.

The following proposition establishes the uniform boundedness of the Lebesgue constants in \eqref{LC-Sum} when $m\sim n$.
\begin{proposition}\label{prop-LC}
    Let $w$ be any Jacobi weight and let $n,m\in\NN$, be such that $m<n$ and $m\sim n.$ Then the projection map $R_n^m:C^0\to C^0$ is uniformly bounded with respect to $n$, i.e.
    \begin{equation}\label{LC-bound}
        \sup_{\substack{n\in\NN\\m\sim n}}\|R_n^m\|=\mathcal{O}(1).
    \end{equation}
    Moreover, for any function $f$ defined on $(-1,1)$, we have
    \begin{equation}\label{bound-R}
        \max_{1\leq i\leq n+1}|f(x_i)|\leq \|R_n^mf\|_\infty \leq \C \max_{1\leq i\leq n+1}|f(x_i)|, \qquad\C\ne\   C(n,m,f).
    \end{equation} 
\end{proposition}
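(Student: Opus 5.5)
The second statement \eqref{bound-R} follows quickly from the first. The left inequality holds because, by Proposition \ref{prop-int}, $R_n^mf(x_j)=f(x_j)$. Hence $\max_j|f(x_j)|=\max_j|R_n^mf(x_j)|\le\|R_n^mf\|_\infty$. For the right inequality, note that $|R_n^mf(x)|\le \max_i|f(x_i)|\sum_i|r_{n,i}^m(x)|$, and take the supremum using \eqref{LC-Sum} and \eqref{LC-bound}. The whole task therefore reduces to showing that the Lebesgue function $\Lambda(x)=\sum_i|r_{n,i}^m(x)|$ is bounded by a constant independent of $n$ when $m\sim n$.

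\textbf{Cauchy--Schwarz reduction.} Write $\lambda_i=\sqrt{\lambda_i}\sqrt{\lambda_i}$. Then
\[
\Lambda(x)\le \frac{1}{k_m(x,x)}\Big(\sum_i\lambda_i k_n(x_i,x)^2\Big)^{1/2}\Big(\sum_i\lambda_i k_m(x_i,x)^2\Big)^{1/2}.
\]
For fixed $x$, the functions $y\mapsto k_n(y,x)^2$ and $y\mapsto k_m(y,x)^2$ are polynomials of degree $2n$ and $2m$, so the Gauss rule \eqref{Gauss}, which is exact on $\PP_{2n+1}$, applies to both. Combined with the reproducing property \eqref{inva}, this gives
\[
\sum_i\lambda_i k_n(x_i,x)^2=\int k_n(y,x)^2w(y)\,dy=k_n(x,x),
\]
and likewise the second sum equals $k_m(x,x)$. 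Consequently
\[
\Lambda(x)\le \sqrt{\frac{k_n(x,x)}{k_m(x,x)}}.
\]

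\textbf{Comparing the Christoffel functions.} It remains to prove that $k_n(x,x)/k_m(x,x)\le\C$ uniformly on $[-1,1]$ when $m\sim n$. This is the main obstacle, and it is the only place where the Jacobi structure enters. I would invoke the classical two-sided estimate for Christoffel functions of Jacobi weights (Nevai; Mastroianni--Milovanović):
\[
\frac{1}{k_n(x,x)}\sim \frac{1}{n}\Big(\sqrt{1-x}+\tfrac1n\Big)^{2\alpha+1}\Big(\sqrt{1+x}+\tfrac1n\Big)^{2\beta+1},\qquad |x|\le1,
\]
with constants independent of $n$ and $x$. Writing the analogous estimate for $m$ and using $\C^{-1}n\le m\le n$, each factor $\sqrt{1\mp x}+1/m$ is comparable to $\sqrt{1\mp x}+1/n$. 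Since the exponents $2\alpha+1$ and $2\beta+1$ are fixed, the ratio is bounded by $\C(n/m)^{C(\alpha,\beta)}\le\C$. This uses only $m\sim n$ and holds for every $\alpha,\beta>-1$, which proves \eqref{LC-bound}.
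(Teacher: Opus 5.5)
Your proposal is correct and follows the paper's argument. You use Cauchy--Schwarz with Gauss exactness and the reproducing property to get $\sum_i|r_{n,i}^m(x)|\le\sqrt{k_n(x,x)/k_m(x,x)}$, bound that ratio for $m\sim n$ via the two-sided Christoffel estimate \eqref{CF}, and deduce \eqref{bound-R} from interpolation plus the Lebesgue constant bound; you also spell out the Gauss-exactness step for $\sum_i\lambda_i k_m(x_i,x)^2$ that the paper's sketch leaves implicit.
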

\begin{proof} 
    A detailed proof of this result is given in \cite{Th-Barel}. Here, for completeness, we provide a sketch of the argument. Note that, by Cauchy–Schwarz inequality and \eqref{inva1}, we get
    \begin{equation}\label{LC}
        \|R_n^m\|\leq\sup_{|x|\leq 1}\sqrt{\frac{k_n(x,x)}{k _m(x,x)}} .
    \end{equation}
    On the other hand, by \cite[Thm. 6.3.28]{Nevai} (see also \cite[Thm. 4.7.8]{Nevai1}),  we have
    \begin{equation}\label{CF}
        k_n(x,x)\sim n \left(\sqrt{1-x}+n^{-1}\right)^{-2\alpha-1}
    \left(\sqrt{1+x}+n^{-1}\right)^{-2\beta -1},\qquad |x|\leq 1,
    \end{equation}
    and therefore there exists a constant $\C\ne \C(n,x)$ such that
    \begin{equation*}
        \sqrt{\frac{k_n(x,x)}{k _m(x,x)}}\sim \sqrt{\frac nm}\left(\frac{\sqrt{1-x}+m^{-1}}{\sqrt{1-x}+n^{-1}}\right)^{\alpha+\frac 12}\left(\frac{\sqrt{1+x}+m^{-1}}{\sqrt{1+x}+n^{-1}}\right)^{\beta +\frac 12}\leq \C\left(\frac nm\right)^{1+\max\{\alpha,\beta\}}, \quad |x|\leq 1,
    \end{equation*}
    which, by the hypothesis $n\sim m$ and \eqref{LC}, proves \eqref{LC-bound}. Furthermore, from \eqref{interp}, \eqref{LC-Sum} and \eqref{LC-bound}, 
    \begin{align*}
        \max_{1\leq i\leq n+1}|f(x_i)|&= \max_{1\leq i\leq n+1}|R_n^mf(x_i)|\leq \max_{|x|\leq 1} |R_n^mf(x)|\leq 
         \max_{|x|\leq 1} \sum_{i=1}^{n+1} |f(x_i) r_{n,i}^m(x)|\\
         &\leq \max_{1\leq i\leq n+1}|f(x_i)| \|R_n^m\|\leq \C  \max_{1\leq i\leq n+1}|f(x_i)|, \qquad \qquad \qquad \qquad \qquad \qquad \C\ne \C(n,f), 
    \end{align*}
    that yields \eqref{bound-R} and completes the proof.
\end{proof}

Let
\begin{equation*}
    E_n(f)_\infty=\inf_{P\in\PP_n}\|f-P\|_\infty,
    \qquad n\in\NN,
\end{equation*}
denote the error of best uniform approximation of $f$ by polynomials of degree at most $n$. Recall that, by the Weierstrass approximation theorem,
\begin{equation*}
    \lim_{n\to\infty}E_n(f)_\infty=0, \qquad \forall f\in C^0,
\end{equation*}
while the decay rate is determined by the smoothness of $f$, as detailed in \cite{DT}.

As a consequence of Proposition \ref{prop-LC} and \eqref{inva3}, we derive the following convergence result. 

\begin{proposition}\label{prop-conv}
    Under the assumptions of Proposition \ref{prop-LC}, for all $f\in C^0$ the following  error estimate holds
    \begin{equation}\label{err-R}
        \|R_n^mf-f\|_\infty
        \leq \C E_{n-m}(f)_\infty,
        \qquad \C\ne\C(n,m,f)
    \end{equation}
    Furthermore, if the free parameter $m$ is chosen such that $m\sim n\sim (n-m)$ holds for all sufficiently large $n\in\NN,$ then
    \begin{equation}\label{conv-R}
        \lim_{n\to\infty}\|R_n^mf-f\|_\infty=0, \qquad \forall f\in C^0,
    \end{equation}
    with the same convergence rate as the best approximation error $E_n(f)_\infty$.
\end{proposition}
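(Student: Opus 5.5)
The estimate \eqref{err-R} will follow from a Lebesgue-type argument that combines the invariance in Proposition \ref{prop-inva} with the uniform bound \eqref{LC-bound}. Fix $f\in C^0$ and take an arbitrary polynomial $P\in\PP_{n-m}$. Because $R_n^m$ is linear and, by \eqref{inva3}, $R_n^mP=P$, we can write
\begin{equation*}
    f-R_n^mf=(f-P)-R_n^m(f-P).
\end{equation*}
The triangle inequality then gives
\begin{equation*}
    \|f-R_n^mf\|_\infty\le \bigl(1+\|R_n^m\|\bigr)\|f-P\|_\infty .
\end{equation*}
Here $\|R_n^m\|$ is the operator norm in \eqref{LC-Sum}. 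Proposition \ref{prop-LC} bounds it by a constant independent of $n$ and $m$ whenever $m<n$ and $m\sim n$. Taking the infimum over $P\in\PP_{n-m}$ yields \eqref{err-R} with $\C=1+\sup_{m\sim n}\|R_n^m\|$. This constant does not depend on $n$, $m$ or $f$; it depends only on the Jacobi weight and on the constant hidden in $m\sim n$.

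For the second part, assume $m\sim n\sim(n-m)$ for all large $n$. Then $m\sim n$ still holds, so \eqref{err-R} applies. Moreover $n-m\to\infty$ as $n\to\infty$, because $n-m\ge \C^{-1}n$. The Weierstrass theorem gives $E_{n-m}(f)_\infty\to0$, and \eqref{conv-R} follows.

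To show that the rate matches that of $E_n(f)_\infty$, I will use the fact that $n-m\ge \theta n$ for some fixed $\theta\in(0,1)$. Since $E_k(f)_\infty$ is nonincreasing in $k$, this gives $E_{n-m}(f)_\infty\le E_{\lfloor\theta n\rfloor}(f)_\infty$. In the other direction, $E_n(f)_\infty\le E_{n-m}(f)_\infty$ holds trivially.

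The main obstacle is the interpretation of ``same convergence rate''. If it means the same order in the usual Jackson/Dini--Lipschitz scales, such as $E_n(f)\sim n^{-r}\omega(f^{(r)},1/n)$ (see \cite{DT}), then it follows from the doubling property of the modulus of smoothness: $\omega(g,t/\theta)\le(1+1/\theta)\,\omega(g,t)$. This gives $E_{\lfloor\theta n\rfloor}(f)$ bounded by a constant times the corresponding Jackson estimate at degree $n$. I would therefore state the rate claim in this sense, namely that the error is $\bigO(E_{\lfloor\theta n\rfloor}(f)_\infty)$ with $\theta$ fixed. I would also note that it coincides with the rate of $E_n(f)_\infty$ for all the usual smoothness classes, such as algebraic or geometric decay, since those rates are invariant under a fixed rescaling of $n$.
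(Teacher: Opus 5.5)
Your argument is correct and is essentially the paper's proof: the invariance \eqref{inva3} combined with the uniform bound \eqref{LC-bound} gives $\|R_n^mf-f\|_\infty\le(1+\|R_n^m\|)\|f-P\|_\infty$, where the paper takes $P$ to be the best approximant instead of taking an infimum, and the assumption $n\sim(n-m)$ forces $n-m\to\infty$. Your treatment of the ``same rate'' claim is more careful than the paper's one-line assertion, with one small caveat: a fixed rescaling $n\mapsto\theta n$ preserves algebraic and Jackson-type rates up to constants, but for geometric decay it only preserves the class, since $\rho^{-n}$ becomes $(\rho^{\theta})^{-n}$.
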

\begin{proof}
Let $P^*\in\PP_{n-m}$ be  such that $E_{n-m}(f)_\infty=\|f-P^*\|_\infty.$ By \eqref{inva3} and \eqref{LC-bound}, we obtain
\begin{equation*}
    \|R_n^mf-f\|_\infty=\|R_n^m(f-P^*)-(f-P^*)\|_\infty\leq (\|R_n^m\|+1)\|f-P^*\|_\infty\leq \C E_{n-m}(f)_\infty,
\end{equation*}
which proves \eqref{err-R}. 
\newline
As $n\to\infty$, the additional assumption $n\sim(n-m)$, ensures that also $(n-m)\to\infty$, and that $E_{n-m}(f)_\infty\to 0,$
with the same decay rate as $E_n(f)_\infty$. The convergence result
\eqref{conv-R} then follows from \eqref{err-R}.
\end{proof}

We remark that the assumption $m \sim n \sim (n-m)$ is satisfied, for instance, by choosing
\begin{equation}\label{eq:n_theta}
    m=\lfloor \theta n\rfloor, 
    \qquad \theta\in(0,1).
\end{equation}
This assumption will be maintained throughout the numerical experiments.

The boundedness of the Lebesgue constant established in Proposition~\ref{prop-LC}, together with the convergence result of Proposition~\ref{prop-conv}, indicates that the rational interpolant $R_n^m$ possesses enhanced approximation properties with respect to the classical Lagrange interpolation, especially in the approximation of less regular functions. To illustrate this behavior, we consider the function
\begin{equation*}
    f(x) = \sin(6x)+\operatorname{sign}(\sin(x+\operatorname{exp}(x))), \qquad |x|\le 1,
\end{equation*}
and compare the classical Lagrange interpolant $L_n$ with the rational interpolant \(R_n^m\) at the same $n+1$ Jacobi zeros associated with $w(y)=(1-y)^{0.3}(1+y)^{0.6}$, taking $n=100$ and \(m\) computed as in \eqref{eq:n_theta} for different values of the parameter \(\theta\).
Figure~\ref{fig:Ar_Interpolator_Comaprison}  shows that the rational interpolant significantly reduces the oscillatory behavior near the singularities of the function. This effect is observed for all admissible values of \(\theta\), and becomes more pronounced as \(\theta\) increases. In particular, the amplitude of the spurious oscillations progressively decreases for larger values of \(\theta\), highlighting the stabilizing effect induced by the rational basis. It is worth noting that the largest errors of the Lagrange interpolant are attained at the endpoints, consistently with the non-convergence of the Lagrange interpolation operator for the considered weight ($w$ has exponents $\alpha, \beta >-1/2$).
\begin{figure}[htbp]
    \centering
    \includegraphics[width=0.75\linewidth]{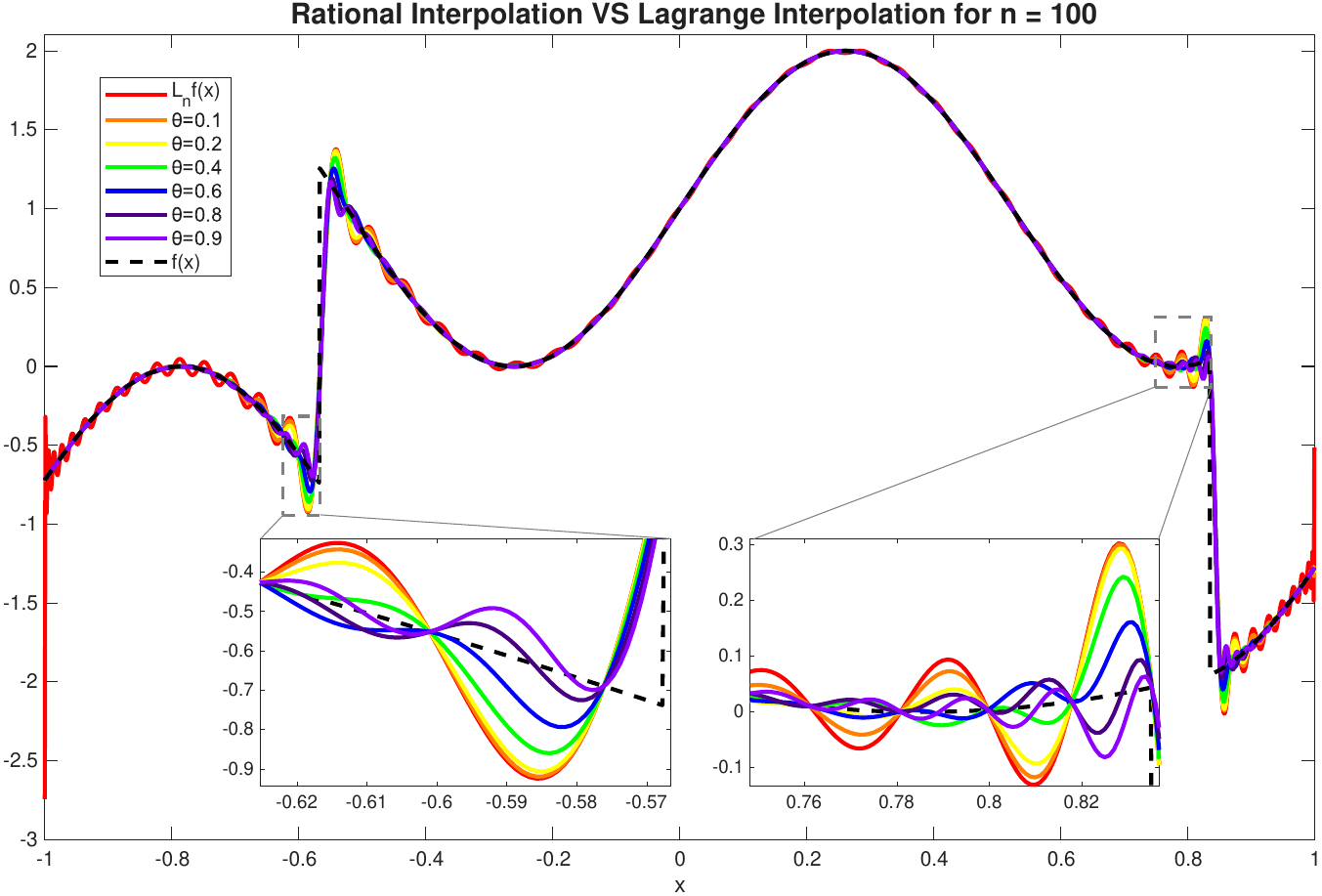}
    \caption{Comparison between the classical Lagrange interpolant $L_nf$ and the rational interpolant \(R_n^mf\) for different values of the parameter \(\theta\). Here, the weight is $w(y)=(1-y)^{0.3}(1+y)^{0.6}$ and \(m\) is computed as in \eqref{eq:n_theta}.}
  \label{fig:Ar_Interpolator_Comaprison}
\end{figure}

\section{The associated projection method}\label{sec:RC_Method}
Here, we introduce the Rational Collocation (RC) method that is the classical collocation method based on the rational
interpolating projection $R_n^m$ previously defined.

In operator form, equation \eqref{FIE} can be written as
\begin{equation}\label{FIE-op}
    (I-H)f(x)=g(x), \qquad |x|\leq 1,
\end{equation}
where $I$ denotes the identity operator and $H$ is defined in \eqref{H}.
\newline
Applying the operator $R_n^m$ defined in \eqref{Rnm} to both sides of \eqref{FIE-op}, we project the equation  onto the rational space $\R_n^m$ and obtain the approximate equation
\begin{equation}\label{FIE-project}
    (I-H_n^m) f(x)=g_n^m(x), \qquad |x|\leq 1,
\end{equation}
where, for simplicity, the same notation $f$ is retained for the unknown function, and we set
\begin{align}\label{gnm}
g_n^m(x)&:=R_n^mg(x)=\sum_{i=1}^{n+1}g(x_i)r_{n,i}^m(x), \qquad \qquad \quad \!\! |x|\leq 1,  \\
    \label{Hnm}
    H_n^mf(x)&:= R_n^m(Hf)(x)=\sum_{i=1}^{n+1}Hf(x_i)r_{n,i}^m(x), \qquad |x|\leq 1.
\end{align}

In what follows, given a linear operator $A:C^0\to C^0$, we denote by $\|A\|:=\sup_{\|f\|_\infty\leq 1}\|Af\|_\infty$  its operator norm and  by $\kappa(A):=\|A\|\|A^{-1}\|$ its condition number. The following theorem follows from standard arguments.
\begin{theorem}\label{th-collo}
	Assume that $\operatorname{ker}(I-H)=\{0\}$ and \eqref{hp1-h}-\eqref{hp2-h} hold. Moreover, for any $g\in C^0$, let $f^*\in C^0$ be the unique solution of \eqref{FIE}. Then, for all sufficiently large $n\in\NN$ and for all $0<m<n$ such that $m\sim n\sim(n-m)$, the  approximate equation \eqref{FIE-project} admits a unique solution $f_n^m$ which belongs to $\R_n^m$.  Moreover, 
    \begin{equation}\label{lim-col}
        \lim_{n\to\infty}\|f^*-f_n^m\|_\infty=0,
    \end{equation}
    with the error estimate
    \begin{equation}\label{er-col}
        \|f^*-f_n^m\|_\infty
        \leq \C\big(
        E_{n-m}(g)_\infty+E_{n-m}(Hf^*)_\infty
        \big),\qquad \C\ne\C(n,m,g, f^*).
    \end{equation}
Finally, as regards the conditioning of equations  \eqref{FIE-project} and \eqref{FIE-op}, we have
    \begin{equation}\label{lim-cond}
        \lim_{n\to\infty}\kappa(I-H_n^m)=\kappa(I-H).
    \end{equation}
\end{theorem}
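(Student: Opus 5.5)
The plan is the classical collocation argument for compact operators. The key ingredient is the norm convergence $\|H-H_n^m\|\to 0$. Since $H:C^0\to C^0$ is compact under \eqref{hp1-h}--\eqref{hp2-h}, the image of the unit ball, $\{Hf:\|f\|_\infty\le 1\}$, is relatively compact in $C^0$. By Proposition~\ref{prop-LC} the projections $R_n^m$ are uniformly bounded whenever $m\sim n$. By Proposition~\ref{prop-conv}, under $m\sim n\sim(n-m)$, they converge pointwise to the identity on $C^0$: $\|R_n^m u-u\|_\infty\le \C E_{n-m}(u)_\infty\to0$. A uniformly bounded sequence of operators converging pointwise converges uniformly on relatively compact sets (a standard $\varepsilon$-net argument). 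Hence
\[
\|H-H_n^m\|=\sup_{\|f\|_\infty\le1}\|(I-R_n^m)Hf\|_\infty\longrightarrow 0 .
\]
This is the step I regard as the heart of the proof. Once the uniform boundedness and the strong convergence are in hand, it is routine, and both of those follow from earlier results.

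Next, $\ker(I-H)=\{0\}$ together with the Fredholm alternative gives that $(I-H)^{-1}$ is bounded on $C^0$. Write $I-H_n^m=(I-H)\big[I-(I-H)^{-1}(H_n^m-H)\big]$. For $n$ large we have $\|(I-H)^{-1}\|\,\|H_n^m-H\|<1/2$, so a Neumann series argument makes $I-H_n^m$ invertible, with $\|(I-H_n^m)^{-1}\|\le 2\|(I-H)^{-1}\|$, uniformly in $n$. This yields a unique solution $f_n^m$ of \eqref{FIE-project}. Because $f_n^m=g_n^m+H_n^mf_n^m$ and both terms on the right lie in the range of $R_n^m$, we get $f_n^m\in\R_n^m$.

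For the error, apply $R_n^m$ to \eqref{FIE-op}: $f^*-R_n^mHf^*=R_n^mg+(f^*-R_n^mf^*)$. One way to proceed is to subtract \eqref{FIE-project} and use $f^*=g+Hf^*$, which gives
\[
(I-H_n^m)(f^*-f_n^m)=f^*-R_n^m f^*=(g-R_n^mg)+(Hf^*-R_n^mHf^*).
\]
Indeed, $(I-H_n^m)f^*-g_n^m=f^*-R_n^m(Hf^*+g)=f^*-R_n^mf^*$, and $f^*=g+Hf^*$ splits this into the two pieces above. Inverting with the uniform bound on $(I-H_n^m)^{-1}$ and applying \eqref{err-R} to $g$ and to $Hf^*$ (both continuous) gives \eqref{er-col}. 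Since $n-m\to\infty$, this also gives \eqref{lim-col}.

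Finally, for the conditioning: $\|I-H_n^m\|\to\|I-H\|$ follows directly from $\|H_n^m-H\|\to0$. For the inverses, the resolvent identity
\[
(I-H_n^m)^{-1}-(I-H)^{-1}=(I-H_n^m)^{-1}(H_n^m-H)(I-H)^{-1},
\]
combined with the uniform bound on $\|(I-H_n^m)^{-1}\|$, shows $\|(I-H_n^m)^{-1}\|\to\|(I-H)^{-1}\|$. Multiplying the two limits gives \eqref{lim-cond}.
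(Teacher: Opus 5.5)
Your proof is correct and follows the paper's argument essentially step by step. The paper uses the same factorization $I-H_n^m=(I-H)[I-(I-H)^{-1}(H_n^m-H)]$ with a Neumann series, the same error identity $f^*-f_n^m=(I-H_n^m)^{-1}[(g-g_n^m)+(H-H_n^m)f^*]$, and the same resolvent identity for the conditioning. The one step where you do more is the norm convergence $\|H-H_n^m\|\to0$: the paper deduces it only from \eqref{conv-R} applied to each $Hf$, which by itself gives just strong convergence, whereas your appeal to the compactness of $H$ together with $\sup\|R_n^m\|<\infty$ is exactly what makes that step rigorous.
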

\begin{proof}
First, observe that for all $0<m<n$ such that  $m \sim n\sim (n-m),$ the equality
\begin{equation}\label{lim-Hnm}
 \lim_{n\to\infty}
 \|H-H_n^m\| =0,  
\end{equation}
holds by virtue of \eqref{conv-R} applied to  $Hf\in C^0$.\newline
Let us prove that $I-H_n^m:C^0\to C^0$ is invertible and the norm of its inverse is uniformly bounded w.r.t. $n\sim m$. To this aim we consider the following decomposition 
\begin{equation}\label{eq:tmp1}
		I-H_n^m=(I-H) (I-(I-H)^{-1}(H_n^m-H)). 
	\end{equation}
    Since, by assumption, $(I-H)$ is invertible, we focus on the second factor $(I-(I-H)^{-1}(H_n^m-H))$.
By \eqref{lim-Hnm}, given  a positive $\varepsilon<1/\|(I-H)^{-1}\|$, there  exists $\bar{n}=\bar{n}(\varepsilon)\in \mathbb{N}$ such that for all $n > \bar{n}$ we have 
	\begin{equation*}
		\|H-H_n^m\|<\varepsilon \quad \implies \quad \| (I-H)^{-1}(H_n^m-H) \|\leq \|(I-H)^{-1}\|\varepsilon <1.
	\end{equation*}
	 Hence, from Neumann series and contractive mapping arguments (see, e.g. \cite[Theorem A.1]{Atkinson_1997}) the operator $I-(I-H)^{-1}(H_n^m-H)$ is invertible and  we have
     \[
     \|(I-(I-H)^{-1}(H_n^m-H))^{-1}\|\leq \frac 1{1- \| (I-H)^{-1}\|\|(H_n^m-H) \|}\leq \frac 1{1- \varepsilon \| (I-H)^{-1}\|},
     \]
     which proves the existence of the unique solution $f_n^m$ of \eqref{FIE-project}, and that
 \begin{equation}\label{bound-IH}
     \|(I-H_n^m)^{-1}\|\leq \frac{\|(I-H)^{-1}}{1- \varepsilon \| (I-H)^{-1}\|}\leq \C\ne \C(n,m).
 \end{equation}
 Moreover, since $f_n^m=g_n^m+ H_n^m(f_n^m)$, we have $f_n^m\in\R_n^m$.
 \newline
 To establish  \eqref{lim-cond}, we start from the identities 
	 \begin{align*}
    (I-H_n^m)-(I-H)&=  H-H_n^m,\\
	(I-H)^{-1}-(I-H_n^m)^{-1}&=(I-H)^{-1}(H-H_n^m)(I-H_n^m)^{-1},
	 \end{align*}
	 that, by the previous results, imply the following
    \begin{align*}
    \left|\|I-H_n^m\|-\|I-H\|\right|&\leq \| H-H_n^m\|\to 0, \ \mbox{as $n\to \infty,$}\\
	\left|\|(I-H)^{-1}\|-\|(I-H_n^m)^{-1}\|\right| &\leq\|(I-H)^{-1}\|\ \|H-H_n^m\|\ \|(I-H_n^m)^{-1}\| \to 0, \ \mbox{as $n\to \infty$}
	 \end{align*}
     and \eqref{lim-cond} follows.
   \newline  
    Finally, the identities
    \begin{equation*}
        (I-H_n^m)f^*= g+ (H-H_n^m)f^* \qquad \text{and} \qquad
        (I-H_n^m)f_n^m= g_n^m,
    \end{equation*}
    imply
    \[
    f^*-f_n^m=(I-H_n^m)^{-1}\left[(g-g_n^m)+(H-H_n^m)f^*\right]
    \]
    and the estimate \eqref{er-col} follows applying \eqref{conv-R} to the functions $g,\ Hf^*\in C^0$, and taking into account that $ \|(I-H)^{-1}\|$ is uniformly bounded w.r.t. $n$ (cf. \eqref{bound-IH}.\vspace{.2cm}
\end{proof}

Let us assume that \eqref{FIE-project} has a unique solution  $f_n^m\in \R_n^m$.  Recalling \eqref{proj}, it admits the following expansion
\begin{equation}\label{fnm}
     f_n^m(x)=\sum_{i=1}^{n+1}f_i\  r_{n,i}^m(x), \qquad f_i=f_n^m(x_i), \qquad \qquad |x|\leq 1. 
\end{equation}
The unknown coefficients $\mathbf{f}=[f_1,\ldots f_{n+1}]^T$ are the solution of the following collocation system
\begin{equation}\label{sist-col-components}
    (I-H_n^m)f_n^m(x_i)=g_n^m(x_i), \qquad i=1,\ldots, n+1,   
\end{equation}
which, by virtue of \eqref{interp}, yields
\begin{equation}\label{sist-col}
    (\mathcal{I}-\mathcal{A})\mathbf{f}=\mathbf{g}, 
\end{equation}
where $\mathbf{g}=[g(x_1),\ldots g(x_{n+1})]^T$, $\mathcal{I}$ denotes the identity matrix of order $n+1$, and
\begin{equation}\label{A-def}
    \mathcal{A}=[A_{i,j}], \qquad A_{i,j}=\int_{-1}^1 h(x_i,y)r_{n,j}^m(y)w(y)dy, \qquad i,j=1,\ldots,n+1.   
\end{equation}
For any matrix $M=[M_{i,j}]$, let $\kappa_\infty(M):=\left\|M \right\|_\infty \, \left\|M^{-1}\right\|_\infty$ denote its condition number with respect to  the  norm 
\[
    \|M\|_\infty:=\sup_{\bm{v}\ne \bm{0}}\frac{\|M\bm{v}\|_\infty}{\|\bm{v}\|_\infty}=\max_i\sum_j |M_{i,j}|, \qquad \|\bm{v}\|_\infty:=\max_i|v_i|.
\]
The following result states that the linear system \eqref{sist-col} is well conditioned if the original equation \eqref{FIE-op} is well conditioned.
\begin{theorem}\label{th-cond-col}
Under the assumptions of Theorem \ref{th-collo}, let the discrete operator $I-H_n^m: \R_n^m\to\R_n^m$ be represented by the matrix  $\mathcal{I}-\mathcal{A}$ with respect to the basis \eqref{rnm}. Then there exists a constant $C>0$ such that we have
\begin{equation}\label{eq-cond-col}
    \C^{-1}\ \kappa(I-H)\leq \liminf_{\substack{n\to \infty \\ m \sim n}}\kappa_\infty(\mathcal{I}-\mathcal{A})\leq \limsup_{\substack{n\to \infty \\ m \sim n}}\kappa_\infty(\mathcal{I}-\mathcal{A})\leq \C\ \kappa(I-H).
\end{equation}
\end{theorem}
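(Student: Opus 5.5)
The plan is to compare the matrix norm with the operator norm on $\R_n^m$ via the coefficient isomorphism given by point evaluation at the nodes. Define $J_n:\R_n^m\to\mathbb{R}^{n+1}$, $J_n\varphi=[\varphi(x_1),\ldots,\varphi(x_{n+1})]^T$, with inverse $J_n^{-1}\mathbf{v}=\sum_i v_i r_{n,i}^m$. By \eqref{bound-R} in Proposition \ref{prop-LC}, $\|J_n\varphi\|_\infty\le\|\varphi\|_\infty$ and $\|J_n^{-1}\mathbf{v}\|_\infty\le \C\|\mathbf{v}\|_\infty$ with $\C\ne\C(n,m)$. By \eqref{interp} and the construction of \eqref{sist-col}, $\mathcal{I}-\mathcal{A}=J_n(I-H_n^m)|_{\R_n^m}J_n^{-1}$. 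Hence
\[
\|\mathcal{I}-\mathcal{A}\|_\infty\le \C\,\|(I-H_n^m)|_{\R_n^m}\|,\qquad \|(I-H_n^m)|_{\R_n^m}\|\le \C\,\|\mathcal{I}-\mathcal{A}\|_\infty,
\]
and the same two-sided bounds hold for the inverses. The restriction $I-H_n^m:\R_n^m\to\R_n^m$ is invertible for large $n$, because $H_n^m$ maps into $\R_n^m$ and Theorem \ref{th-collo} gives invertibility on $C^0$. Multiplying these bounds gives $\C^{-1}\kappa((I-H_n^m)|_{\R_n^m})\le\kappa_\infty(\mathcal{I}-\mathcal{A})\le \C\,\kappa((I-H_n^m)|_{\R_n^m})$.

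Next I compare the restricted operator with the full operator on $C^0$. The restricted norms are trivially bounded by the full norms:
\[
\|(I-H_n^m)|_{\R_n^m}\|\le\|I-H_n^m\|,\qquad \|((I-H_n^m)|_{\R_n^m})^{-1}\|\le\|(I-H_n^m)^{-1}\|.
\]
Together with \eqref{lim-cond} this yields the limsup bound.

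The liminf bound requires reverse inequalities, which is the main obstacle. For the operator, I take $f\in C^0$ and write $(I-H_n^m)f=f-R_n^mHf$. Since $H_n^mf=H_n^m(R_n^mf)+R_n^mH(f-R_n^mf)$, this is not directly controlled, so I use a different route. For any $f$ with $\|f\|_\infty\le1$, set $\varphi=R_n^m f\in\R_n^m$. Then
\[
\|(I-H)f\|_\infty\le\|(I-H)\varphi\|_\infty+\|I-H\|\,\|f-\varphi\|_\infty .
\]
The second term does not tend to zero uniformly in $f$, so I instead argue via norms directly. Choose $f$ nearly attaining $\|I-H\|$ and approximate it by a polynomial $P\in\PP_{n-m}\subset\R_n^m$ with $\|f-P\|_\infty$ small; this fixed $f$ is independent of $n$. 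Then $\|(I-H_n^m)|_{\R_n^m}\|\ge\|(I-H_n^m)P\|_\infty/\|P\|_\infty\to\|(I-H)P\|_\infty/\|P\|_\infty$, using \eqref{lim-Hnm}. This gives $\liminf\|(I-H_n^m)|_{\R_n^m}\|\ge\|I-H\|$. For the inverse, pick $g$ nearly attaining $\|(I-H)^{-1}\|$, approximate it by $P\in\PP_{n-m}$, and set $\varphi=((I-H_n^m)|_{\R_n^m})^{-1}P$. Then $\varphi\to(I-H)^{-1}P$ by the perturbation identities in the proof of Theorem \ref{th-collo}, so $\liminf\|((I-H_n^m)|_{\R_n^m})^{-1}\|\ge\|(I-H)^{-1}\|$ up to $\varepsilon$.

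Combining the two steps, the restricted condition number converges to $\kappa(I-H)$. The equivalence constants from the first step then give \eqref{eq-cond-col}, with $\C$ equal to the square of the Lebesgue-constant bound in \eqref{LC-bound}.
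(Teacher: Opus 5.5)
Your proposal is correct and has the same skeleton as the paper's proof. The point-evaluation isomorphism between $\R_n^m$ and $\mathbb{R}^{n+1}$ is uniformly bicontinuous by \eqref{bound-R}, so it converts $\kappa_\infty(\mathcal{I}-\mathcal{A})$ into the condition number of $I-H_n^m$ up to a factor $\C^{\pm 2}$. Then \eqref{lim-cond} is invoked.

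The difference is one step the paper glosses over. The paper passes directly from $\sup_{V\in\R_n^m,\,V\neq 0}\|(I-H_n^m)V\|_\infty/\|V\|_\infty$, and the analogous quantity for the inverse, to the $C^0$ norms $\|I-H_n^m\|$ and $\|(I-H_n^m)^{-1}\|$. In other words, it silently identifies the condition number of the restriction to $\R_n^m$ with $\kappa(I-H_n^m)$. You observe that only the inequality ``restricted $\le$ full'' is automatic. That inequality suffices for the $\limsup$ bound. For the $\liminf$ bound you supply the missing reverse estimate by testing on polynomials in $\PP_{n-m}\subset\R_n^m$. You combine this with $\|H-H_n^m\|\to 0$ from \eqref{lim-Hnm}, and with $\|(I-H_n^m)^{-1}-(I-H)^{-1}\|\to 0$, which follows from the resolvent identity and \eqref{bound-IH}.

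Your version is slightly longer, but it actually closes the gap in the lower bound. It also yields the sharper intermediate fact that the condition number of the restriction itself tends to $\kappa(I-H)$, with the explicit constant $\C=(\sup\|R_n^m\|)^2$ from \eqref{LC-bound}.

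One cosmetic point: choose the approximating polynomial $P$ of a fixed degree independent of $n$, so that $P\in\PP_{n-m}$ for all large $n$ because $n-m\to\infty$. If you instead let $P$ depend on $n$, say so; the norm convergence of $H_n^m$ and of the inverses still handles it.
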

\begin{proof}
Given two vectors $\bm{v}=[v_1,\dots,v_{n+1}]^\top\in \mathbb{R}^{n+1}$ and $\bm{u}=[u_1,\dots,u_{n+1}]^\top\in \mathbb{R}^{n+1},$ we define the following functions
\begin{equation*}
     V(x)=\sum_{i=1}^{n+1}v_i r_{n,i}^m(x), \qquad \qquad U(x)=\sum_{i=1}^{n+1}u_i r_{n,i}^m(x), \qquad \qquad |x|\leq1,
\end{equation*}
which belong to the rational space $\R_n^m.$ By construction, $V =0$ iff $\bm{v}=\bm{0}$ and $U =0$ iff $\bm{u}=\bm{0}.$
Since the matrix $\mathcal{I}-\mathcal{A}$ represents the discrete operator $I-H_n^m: \R_n^m\to\R_n^m$ with respect to the basis of \eqref{rnm}, the following characterization 
\begin{equation*}
    U=(I-H_n^m)V \qquad \iff \qquad \bm{u}=(\mathcal{I}-\mathcal{A}) \ \bm{v},
\end{equation*}
holds true. Furthermore, from \eqref{bound-R}, we have $\|V\|_\infty\sim \|\bm{v}\|_\infty$ and $\|U\|_\infty\sim \|\bm{u}\|_\infty,$ that in turn imply 
\begin{equation*}
    \begin{split}
        &\|\mathcal{I}-\mathcal{A}\|_\infty=\sup_{\bm{v}\neq \bm{0}}\dfrac{\|(\mathcal{I}-\mathcal{A})\bm{v}\|_\infty}{\|\bm{v}\|_\infty}\sim \sup_{V\neq0} \dfrac{\|(I-H_n^m)V\|_\infty}{\|V\|_\infty},\\
        &\|(\mathcal{I}-\mathcal{A})^{-1}\|_\infty=\sup_{\bm{u}\neq \bm{0}}\dfrac{\|(\mathcal{I}-\mathcal{A})^{-1}\bm{u}\|_\infty}{\|\bm{u}\|_\infty}\sim \sup_{U\neq0} \dfrac{\|(I-H_n^m)^{-1}U\|_\infty}{\|U\|_\infty}.
    \end{split}
\end{equation*}
Therefore, 
\begin{equation*}
    \kappa_\infty(\mathcal{I}-\mathcal{A})=\|\mathcal{I}-\mathcal{A}\|_\infty \ \|(\mathcal{I}-\mathcal{A})^{-1}\|_\infty\sim \|I-H_n^m\| \ \|(I-H_n^m)^{-1}\| =\kappa (I-H_n^m),
\end{equation*}
so that by  \eqref{lim-cond} 
we get
\begin{gather*}
    \liminf_{\substack{n\to \infty \\ m \sim n}}\kappa_\infty(\mathcal{I}-\mathcal{A})\sim \liminf_{\substack{n\to \infty \\ m \sim n}}\kappa (I-H_n^m)=\kappa (I-H),\\
    \limsup_{\substack{n\to \infty \\ m \sim n}}\kappa_\infty(\mathcal{I}-\mathcal{A})\sim \limsup_{\substack{n\to \infty \\ m \sim n}}\kappa (I-H_n^m)=\kappa (I-H),
\end{gather*}
which yield \eqref{eq-cond-col}. 
\end{proof}

In conclusion, following a classical approach applied to the projection $R_n^m$, the RC method approximate the unique solution of \eqref{FIE} at the desired precision by means of the rational function $f_n^m$ in \eqref{fnm}. The coefficients $f_j$ are computed by solving the linear system \eqref{sist-col}, which is  well--conditioned if the original problem is well--conditioned. Moreover, according to the error estimates \eqref{er-col}, the convergence rate depends on the regularity of $g$ and $Hf^*$. 
\newline
The main effort in this procedure is the computation of the integrals in \eqref{A-def}. In this regard, we observe that if we approximate them by the Gaussian rule \eqref{Gauss}, due to \eqref{interp}, we get exactly the linear system associated to the classical Nystr\"om method, namely
\begin{equation}\label{Nystrom-sist}
    \bar f_i-\sum_{j=1}^{n+1} \lambda_j h(x_i, x_j)\bar f_j=g(x_j), \qquad j=1,\ldots, n+1.
\end{equation}
Hovewer, in such case, instead of computing the Nystr\"om interpolant
\begin{equation}\label{Nystrom-inter}
     \bar f(x)=g(x)+\sum_{i=1}^{n+1}\lambda_i \bar f_i \ h(x, x_i),\qquad |x|\leq 1,  
\end{equation}
we approximate the solution by the rational function
\begin{equation}\label{fnm-MN}
    \bar f_n^m(x)=\sum_{i=1}^{n+1}\bar f_i r_{n,i}^m(x), \qquad \qquad |x|\leq 1.  
\end{equation}
We refer to the scheme \eqref{Nystrom-sist}, \eqref{fnm-MN} as the Modified Nystr\"om (MN) method. Such a method has the advantage of using only some sampling of the known functions $g$ and $h$, but its performance turns out to be almost equivalent to Nystr\"om method, as shown in the numerical experiments of Section \ref{sec:Numerical_Tests}. 

A different approach to approximate the integrals in \eqref{A-def} is proposed in the following section.

\section{The RDC method}\label{sec:RDC_Method}
Here, we develop a Rational Discrete Collocation (RDC) method that employs the rational projector $R_n^m$ and  overcomes the  problem of computing the integrals in \eqref{A-def}. We first note that, by \eqref{rnm}, such integrals can be written as 
\begin{equation}\label{Aij}
    A_{i,j}=\lambda_j \int_{-1}^1 [\lambda_m(y)h(x_i,y)] K_n(x_j,y)K_m(x_j,y)w(y)dy, \qquad i,j=1,\ldots,n+1,
\end{equation}
where 
\begin{equation}\label{lambda}
    \lambda_m(y)=\frac 1{K_m(y,y)}, \qquad |y|\leq 1,
\end{equation}
denotes the Christoffel function of order $m$ corresponding to the Jacobi weight $w$. We then observe that, by replacing the functions
\begin{equation}\label{Fi}
    F_i(y):= \lambda_m(y)h(x_i,y), \qquad i=1,\ldots, n+1,  
\end{equation} 
in \eqref{Aij} with suitable polynomials $\mathcal{P}_i(y)$ of degree at most $n-m$, the resulting integrals can be exactly evaluated as follows
\begin{equation}\label{inva-Pi}
    \int_{-1}^1 \mathcal{P}_i(y) K_n(x_j,x)K_m(x_j,x)w(x)dx=\mathcal{P}_i(x_j),\qquad i,j=1,\ldots, n+1, \qquad \forall \mathcal{P}_i\in\PP_{n-m},   
\end{equation}
where we used the reproducing property \eqref{inva} with $P(y)=\mathcal{P}_i(y)K_m(x_j, y)\in \PP_{n}$.
 
This consideration constitutes the underlying idea of the RDC method presented in the following subsections.
\subsection{The choice of the polynomial approximation}\label{subsec:choice_Polynomials}
For each arbitrarily fixed $x\in[-1,1]$, we propose to approximate the function
\begin{equation}\label{F}
    F(y)=\lambda_m(y) h(x,y), \qquad |y|\leq 1,
\end{equation}
by the filtered de la Vall\'ee Poussin (VP) type interpolation polynomial $V_N^MF(y),$ defined as follows (see, e.g., \cite{Th-1999, Woula_Uniform})
\begin{equation}\label{VP}
    V_N^MF(y)=\sum_{s=1}^N F(y_s)\Phi_{N,s}^M(y),\qquad |y|\leq 1,   
\end{equation}
where 
\begin{equation}\label{ys}
   y_s=\cos\left(\dfrac{(2s-1)\pi}{2N}\right),\qquad s=1,\ldots,N, 
\end{equation}
are the first kind Chebyshev zeros of order $N.$ In \eqref{VP}, the basis functions $\Phi_{N,s}^M$ are the fundamental VP-Chebyshev polynomials of parameter $N$ and $M$, given by \cite{Woula_Uniform}
\begin{equation}\label{fund-VP}
    \Phi_{N,s}^M(y)=\frac \pi N\sum_{r=0}^{N+M-1}
    \mu_{r}\tilde T_r(y_s)\tilde T_r(y),\qquad s=1,\ldots,N,\qquad |y|\leq 1,
\end{equation}
with
\begin{equation*}
    \tilde T_r(y)=\displaystyle\sqrt{\frac 2\pi}
    \begin{cases}
        \displaystyle \sqrt{1/2} & \mbox{if}\quad r=0,\\
         \displaystyle \cos[r(\arccos y)] & \mbox{otherwise},
    \end{cases}
    \qquad
    \mu_r=\begin{cases}
        1 & \mbox{if}\quad 0\leq r\leq N-M,\\
        \dfrac{M+N-r}{2M} & \mbox{if}\quad
    N-M< r< N+M.
    \end{cases}
\end{equation*}
The free VP parameters $0<M<N$ are required to satisfy the following conditions
\begin{align}
    \label{cond-1}
   & N+M-1\leq n-m,\\
   \label{cond-2}
   & M\sim N\sim (N-M).
\end{align}
The former ensures that $V_N^MF\in\PP_{n-m}$, allowing us to apply \eqref{inva-Pi} with $\mathcal{P}_i=V_N^MF_i$. The latter guarantees that $V_N^MF$ is a stable near--best approximation of $F$.

More precisely, for the purposes of this study, we recall the following properties  (see, e.g. \cite{Woula_Donatella_Cheb_1D, OT-DRNA21, Woula_Uniform, Woula_Weighted})
\begin{enumerate}
\item{\bf Interpolation.} For all $0<M<N$ we have
\begin{equation}\label{interp_V}
    V_N^MF(y_s)=F(y_s), \qquad s=1,\ldots,N.
\end{equation}
\item{\bf Stability.} For all $0<M<N$ such that $M\sim N$,  we have
\begin{align}
\label{equiVP-inf}
    \|V_N^M F\|_\infty &\sim \max_s |F(y_s)|,\\
\label{equiVP-1}
    \|V_N^M F\|_{L^1_v}&\sim  \frac 1 N\sum_{s=1}^N |F(y_s)|,
\end{align}
where $L^1_v=\{F \ : \ \|F\|_{L^1_v}<\infty\}$, with
\[
    \|F\|_{L^1_v}= \int_{-1}^1|F(y)|v(y)dy, \qquad \text{with} \qquad v(y)= \frac{1}{\sqrt{1-y^2}}.
\]
\item{\bf Convergence.} If \eqref{cond-2} holds for sufficiently large $N\in\NN$ then, for all bounded and Riemann integrable function $F\in L^1_v$, we have 
\begin{equation}\label{lim-VP}
    \lim_{N\to\infty}\|V_N^M F-F\|_{L^1_v}=0,
\end{equation}
where the rate of convergence depends on the smoothness properties of the target function $F$. In particular, 
if $F\in BV$ is a function of bounded variation on $[-1,1]$, then
\[
    \|V_N^M F-F\|_{L^1_v}\leq \frac \C N \int_{-1}^1 |dF(y)|, \qquad \C\ne \C(N,M,F).
\]
Furthermore, if $F\in L^1_v$ is locally continuous on $[-1,1]$, then
\[
    \|V_N^M F-F\|_{L^1_v}\leq \frac \C N \int_0^\frac 1n \frac{\Omega_\varphi^r(F,t)_{L^1_v}}{t^2} dt, \qquad r>1,\qquad \C\ne \C(N,M,F).
\]
\item{\bf Error estimates}
The previous properties hold for arbitrary functions $F(y)$. In the case of our interest we prove the following result which will be used later on.
\begin{theorem}\label{prop-MarciVP}
Let the positive integers $N>M$ be such that \eqref{cond-2} holds, and let $\lambda_m$ be the Christoffel function \eqref{lambda} corresponding to a Jacobi weight $w$ and  $m\in\NN$ such that $m\sim N$. For all $f\in L^1_w$, if $F(y)=f(y)\lambda_m(y)$ then 
\begin{equation}\label{MarciVP}
\int_{-1}^1|V_N^MF(y)|k_m(y,y)w(y)dy\leq \C\sum_{s=1}^N |f(y_s)|\lambda_m(y_s), \qquad \C\ne\C(N,m,f).
\end{equation} 
Moreover, set $\varphi(y)=\sqrt{1-y^2}$, if $fw\varphi$ is bounded and $fw$ is Riemann integrable on $[-1,1]$, then we have
\begin{equation}\label{lim-VP1}
    \lim_{N\to\infty}\int_{-1}^1|f(y)-V_N^MF(y)k_m(y,y)|\ w(y)dy=0, \qquad F(y)=f(y)\lambda_m(y).
\end{equation} 
\end{theorem}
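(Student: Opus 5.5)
Both claims reduce to comparing the weight $k_m(y,y)w(y)$ with the Chebyshev weight $v(y)=1/\sqrt{1-y^2}$, and then using the stability \eqref{equiVP-1} and convergence \eqref{lim-VP} of $V_N^M$ in $L^1_v$.

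\textbf{Weight comparison.} By \eqref{CF} with $n$ replaced by $m$,
\[
k_m(y,y)w(y)\sim m\,\frac{(1-y)^\alpha(1+y)^\beta}{(\sqrt{1-y}+m^{-1})^{2\alpha+1}(\sqrt{1+y}+m^{-1})^{2\beta+1}}.
\]
Away from the $m^{-2}$-neighbourhoods of $\pm1$, where $\sqrt{1\mp y}\gtrsim m^{-1}$, this gives $k_m(y,y)w(y)\sim m\,v(y)$. Inside those neighbourhoods, say near $y=1$, we get $k_m(y,y)w(y)\sim m^{2\alpha+2}(1-y)^\alpha$. That factor is not comparable with $m\,v$ pointwise, but its integral over $[1-m^{-2},1]$ is $\sim 1$, the same as $m\int v$ over that interval.

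\textbf{Proof of \eqref{MarciVP}.} I would establish the inequality
\[
\int_{-1}^1 |P(y)|\,k_m(y,y)w(y)\,dy\le \C\, m\int_{-1}^1|P(y)|\,v(y)\,dy,\qquad P\in\PP_{\ell},\quad \ell\sim m.
\]
Away from the endpoints it follows directly from the weight comparison. On the $m^{-2}$-neighbourhoods of $\pm1$, I would use the polynomial inequality $\sup_{[1-m^{-2},1]}|P|\le \C\, m\int_{1-4m^{-2}}^1|P|\,v$, valid for degree $\ell\sim m$; this is a Nikolskii / Remez-type estimate in the variable $\theta=\arccos y$. Since $\int_{1-m^{-2}}^1 k_m(y,y)w(y)\,dy\sim1$, the endpoint contribution is at most $\C m\int|P|v$. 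I expect this endpoint control to be the main obstacle, especially when $\alpha$ or $\beta$ is negative. A Marcinkiewicz-type inequality for doubling weights would be the natural tool to cite.

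Apply the inequality with $P=V_N^MF$, using $\deg V_N^MF\le N+M-1\sim m$. Then \eqref{equiVP-1} gives
\[
\int|V_N^MF|\,k_m w\le \C\,\frac mN\sum_{s}|F(y_s)|,
\]
and $F(y_s)=f(y_s)\lambda_m(y_s)$ together with $m\sim N$ yields \eqref{MarciVP}.

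\textbf{Proof of \eqref{lim-VP1}.} Write $f=F\,k_m(\cdot,\cdot)$, so the integrand equals $|F-V_N^MF|\,k_m w$. Following a density argument, given $\varepsilon>0$ I would choose a polynomial $Q$ of fixed degree with $\int|f-Q|w<\varepsilon$; this is possible because $fw$ is Riemann integrable and $fw\varphi$ is bounded. Set $G=Q\lambda_m$ and decompose
\[
F-V_N^MF=(F-G)-V_N^M(F-G)+(G-V_N^MG).
\]

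\emph{First term.} Since $(F-G)k_m=f-Q$, its contribution is $\int|f-Q|w<\varepsilon$.

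\emph{Second term.} By \eqref{MarciVP} it is bounded by $\C\sum_s|f-Q|(y_s)\lambda_m(y_s)$. The estimate $\lambda_m(y_s)\sim w(y_s)\varphi(y_s)/m$ turns this sum into a Riemann sum for $\int|f-Q|w$ over the Chebyshev nodes (spacing $\pi/N$ in $\theta$). By Riemann integrability and the boundedness of $(f-Q)w\varphi$, which controls the nodes near the endpoints, this term is $\le\C\varepsilon$ for large $N$.

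\emph{Third term.} Here $G=Q\lambda_m$ with $Q$ fixed. I would show that $\int|G-V_N^MG|\,k_m w\to0$, using the inequality of the previous step together with the smoothness of $\lambda_m$ at scale $1/m$ (it is comparable to a polynomial of degree $\sim m$, cf.\ Nevai).

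Combining the three bounds and letting $\varepsilon\to0$ gives \eqref{lim-VP1}.
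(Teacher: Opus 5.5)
Your proof of \eqref{MarciVP} has the right structure, but it relies on a false local inequality, which is easy to repair. Your proof of \eqref{lim-VP1} has a genuine gap in the third term.

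\textbf{The bound \eqref{MarciVP}.} Applying a comparison $\int|P|\,k_m(y,y)w(y)\,dy\le\C\, m\int|P|v$ directly to $P=V_N^MF$ is slightly more economical than the paper. The paper first uses the triangle inequality and then bounds $\int|\Phi_{N,s}^M|k_mw\le\C$ for each fundamental polynomial. The problem is the endpoint inequality you propose, which is false. After the affine change of variable $u=m^2(1-y)/4$, the restriction of a degree-$\sim m$ polynomial to $[1-4m^{-2},1]$ is an arbitrary polynomial of degree $\sim m$ on $[0,1]$, and such polynomials can concentrate at width $\sim 1/m$. Concretely:
\begin{itemize}
\item Let $p(t)=q(t^2)$ be the symmetrized square of the normalized Legendre Christoffel--Darboux kernel at $t=1/4$. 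Then $\deg p\sim m$, $p(1/4)\ge 1$ and $\int_{-1}^1|p|\lesssim 1/m$.
\item The polynomial $P(y)=q(m^2(1-y)/4)$ satisfies $\sup_{[1-m^{-2},1]}|P|\ge 1$.
\item The substitution $1-y=4t^2/m^2$ gives $m\int_{1-4m^{-2}}^1|P|v\approx\sqrt2\int_{-1}^1|p|\lesssim 1/m$.
\end{itemize}
So the factor $m$ in your inequality would have to be replaced by at least $m^2$. Two repairs work. You can combine the \emph{global} Nikolskii inequality $\|P\|_\infty\le\C\,\ell\,\|P\|_{L^1_v}$ with $\int_{1-m^{-2}}^1 k_m(y,y)w(y)\,dy\sim 1$. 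Alternatively, discard the endpoint layers with the Remez-type inequality \eqref{Remez}, as the paper does.

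\textbf{The limit \eqref{lim-VP1}.} The genuine gap is the third term, which is where all the difficulty of the statement sits.
\begin{itemize}
\item Your first-step inequality does not apply to $G-V_N^MG$, since this is not a polynomial.
\item The correct reduction applies \eqref{Remez} to the polynomial $Q-k_m(y,y)V_N^MG$. This leaves you to prove $\int_{I_m}|\tilde G-V_N^M\tilde G|\,v\to0$ for $\tilde G=m\lambda_mQ$.
\item Because $\tilde G$ changes with $N$, \eqref{lim-VP} is not available.
\item Saying that $\lambda_m$ is smooth at scale $1/m$, or comparable to a polynomial of degree $\sim m$, gives only $\bigO(1)$ bounds. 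Scale $1/m$ in $\theta$ is exactly the resolution of $\PP_{N-M}$, and a two-sided comparison never produces a vanishing error.
\end{itemize}

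The missing idea is the paper's:
\begin{itemize}
\item Write $m\lambda_m=\pi\varphi w\,\sigma_m$, where $\sigma_m$ is bounded on $I_m$ by \eqref{stima-Nevai} and $\sigma_m\to1$ uniformly on compact subsets of $(-1,1)$ by \eqref{MNT}.
\item Split $\tilde G=\psi+(\sigma_m-1)\psi$ with $\psi$ independent of $N$, and apply \eqref{lim-VP} to $\psi$.
\item Show that $(\sigma_m-1)\psi$ is small in $L^1_v$.
\item Also show it is small in the Gauss--Chebyshev sums $\frac1N\sum_s|(\sigma_m-1)\psi|(y_s)$, because $V_N^M$ only sees nodal values (cf. \eqref{equiVP-1}). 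Do this by separating $[-1+\delta,1-\delta]$ from its complement and using the absolute continuity of $\int|\psi|v$ together with \eqref{Gauss-conv}.
\end{itemize}

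This argument runs directly with $\psi=\pi\varphi wf$, which is bounded and Riemann integrable by hypothesis. Your density step through $Q$ therefore gains nothing, and it creates a problem. If $\alpha$ or $\beta<-1/2$, then $Qw\varphi$ is unbounded at the corresponding endpoint unless $Q$ vanishes there. In that case the boundedness you invoke in the second term, and that of $\pi\varphi wQ$ in the third, fails. It can be restored by choosing $Q$ divisible by $1-y^2$, since such polynomials are still dense in $L^1_w$.
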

\begin{proof}
    Let us first prove \eqref{MarciVP}. From the definition \eqref{VP}, we have 
    \begin{align}
        \int_{-1}^1|V_N^MF(y)|k_m(y,y)w(y)dy&=\int_{-1}^1 \left|\sum_{s=1}^N f(y_s)\lambda_m(y_s)\phi_{N,s}^M(y)\right| k_m(y,y)w(y)dy\notag\\
        &\leq  \sum_{s=1}^N |f(y_s)|\lambda_m(y_s)\int_{-1}^1 \left|\phi_{N,s}^M(y)\right| k_m(y,y)w(y)dy.
        \label{eq1}
    \end{align}
    Regarding the last integral, since $\phi_{N,s}^M(y)k_m(y,y)$ is a polynomial, we apply the Remez type inequality \cite[(8.1.4)]{DT}: for all $\nu\in\NN$ and for every polynomial $P\in\PP_{\nu}$, 
    \begin{equation}\label{Remez}
        \int_{-1}^1 |P(y)|w(y)\,dy\le \C\int_{-1+\C \nu^{-2}}^{\,1-\C \nu^{-2}}|P(y)|w(y)\,dy,
        \qquad \C\ne\C(\nu,P).
    \end{equation}
    Since in our regime $m\sim N\sim M$ the involved polynomials have degree $\C\,m$,  \eqref{Remez} allows us to restrict the integration to
    \[
    I_m:=[-1+\C m^{-2}, 1-\C m^{-2}],
    \]
    and on this interval, by \eqref{CF}, we have
    \begin{equation}\label{stima-k}
     k_m(y,y)\sim \frac m{w(y)\varphi(y)}, \qquad \forall y\in I_m,\qquad \varphi(y):=\sqrt{1-y^2}.
    \end{equation} 
    We specify that here and in the following the involved constants $\C>0$  are always supposed independent of the parameters of our interest, namely $\C\ne \C(N,M,m,f)$.\newline 
    By \eqref{Remez}, \eqref{stima-k}, \eqref{equiVP-1} and $m\sim N$,
    we get
    \begin{align*}
        \int_{-1}^1 \left|\phi_{N,s}^M(y)\right| k_m(y,y)w(y)dy&\leq \C\int_{I_m} \left|\phi_{N,s}^M(y)\right| k_m(y,y)w(y)dy 
        \\
        &\leq \C\ m \int_{I_m}\left|\phi_{N,s}^M(y)\right| \frac{dy}{\varphi(y)}
        =  \C\ m \int_{I_m}\left|V_N^M \phi_{N,s}^M(y)\right| \frac{dy}{\varphi(y)} \\
        &\leq \C\frac{m}{N} \sum_{r=0}^N \left|\phi_{N,s}^M(y_r)\right|= \C\frac{m}{N}\leq \C 
    \end{align*}
    and \eqref{MarciVP} follows from \eqref{eq1}.

    To prove \eqref{lim-VP1}, we recall the following estimate of the Christoffel function (see, e.g. \cite[(18)]{NevaiVert})
    \begin{equation}\label{stima-Nevai}
        \lambda_m(y)\sim \begin{cases}
            \dfrac{w(y)\varphi(y)}{m}, \qquad \ &  y\in I_1=I_m:=[-1+\C m^{-2},\ 1-\C m^{-2}],\\[2pt]
            \dfrac{w(1-\C m^{-2})}{m^2},  &  y\in I_2=[1-\C m^{-2},\ 1],\\[2pt]
            \dfrac{w(-1+\C m^{-2})}{m^2},  &  y\in I_3=[-1, \ -1+\C m^{-2}],
        \end{cases}
        \quad  \ \C\ne\C(m,y).
    \end{equation}
    We also recall that the Christoffel function satisfies (see \cite[Ch.~4]{Nevai} and \cite{MateNevaiTotik_1991}):
    \begin{equation}\label{MNT}
        \lim_{m\to\infty} m\,\lambda_m(y)=\pi\,\varphi(y)\,w(y)\qquad\text{uniformly on compact subsets of }(-1,1).
    \end{equation}
Moreover, set
$
E_n(f)_{L^1_w}=\inf_{P\in\PP_n}\|(f-P)w\|_1
$, 
the Weierstrass  approximation theorem yields
\begin{equation}\label{best-L1}
  \lim_{n\to\infty}E_{n}(f)_{L^1_w}=0, \qquad \forall f\in L^1_w . 
\end{equation}
    Set  $G=mF=m\lambda_m f$ and let $P^*\in\PP_{n}$ be the polynomial of best approximation of $f$ in $L^1_w$, i.e. $\|(f-P^*)w\|_{1}=E_{n}(f)_{L^1_w}$. By adding and subtracting $P^*$ and applying the Remez inequality \eqref{Remez} (all the polynomials involved having degree $\C\,m$ in the regime $m\sim n\sim N\sim M$),
    we get
    \begin{align}
        &\int_{-1}^1\big|f(y)-V_N^MF(y)\,k_m(y,y)\big| w(y)\,dy\notag\\
        &\le \int_{-1}^1|f(y)-P^*(y)|w(y)dy
           +\int_{-1}^1\big|P^*(y)-V_N^MF(y)\,k_m(y,y)\big| w(y)\,dy\notag\\
        &\le E_{n}(f)_{L^1_w}+\C\int_{I_m}\big|P^*(y)-V_N^MF(y)\,k_m(y,y)\big| w(y)\,dy\notag\\
        &\le E_{n}(f)_{L^1_w}+\C\int_{I_m}|P^*(y)-f(y)|w(y)dy
           +\C\int_{I_m}\big|f(y)-V_N^MF(y)\,k_m(y,y)\big| w(y)\,dy\notag\\
        &\le \C\,E_{n}(f)_{L^1_w}
           +\C\int_{I_m}\big|G(y)-V_N^MG(y)\big|\,\frac{dy}{\varphi(y)},\label{eq:reduce-Im}
    \end{align}
    where in the last line we used $f=G\,k_m/m$,  and  \eqref{stima-Nevai} that yields $k_m(y,y)w(y)\sim m/\varphi(y)$ on $I_m$. 
 \newline   
    Thus, by \eqref{best-L1}, we are reduced to prove that \begin{equation}\label{tesi-G}
      \lim_{N\to\infty} 
      \int_{I_m}|G(y)-V_N^MG(y)| v(y)dy= 0, \qquad G(y)=m\lambda_m(y)f(y).
    \end{equation}
    Consider the functions: 
    \[
\psi(y):=\pi\,\varphi(y)\,w(y)\,f(y),\qquad |y|\le 1,\qquad\mbox{and}\qquad 
\sigma_m(y):=\frac{m\lambda_m(y)}{\pi\varphi(y) w(y)}, \quad |y|<1.
    \]
Note that the former is {bounded} (since $fw\varphi$ is bounded) and {Riemann integrable} (since $fw$ is Riemann integrable and $\varphi$ is continuous); the latter, by \eqref{stima-Nevai} and \eqref{MNT},  satisfies
\begin{equation}\label{sig-prop-1}
\C^{-1}\le\sigma_m(y)\le\C,\qquad \forall y\in I_m \qquad   \C\ne\C(m,y),
\end{equation}
and 
\begin{equation}\label{sig-prop-2}
\lim_{m\to\infty}\sigma_m(y)=1\qquad \mbox{uniformly on the compact subsets of $(-1,1)$}.    
\end{equation}
Writing $G=\psi+(\sigma_m-1)\psi$, we obtain
\begin{align*}
    \int_{I_m}|G(y)-V_N^MG(y)|v(y){dy}
    &\le \|\psi-V_N^M\psi\|_{L^1_v}
     +\int_{I_m}|(\sigma_m(y)-1)\psi(y)|\,v(y){dy}+\\
    &+\big\|V_N^M[(\sigma_m-1)\psi]\big\|_{L^1_v}=:A+B+C
\end{align*}
    where in $A,C$ we enlarged the integration domain from $I_m$ to $[-1,1]$.

    \emph{Term $(A)$.} Since $\psi$ is a \emph{fixed} bounded and Riemann integrable function, the convergence property \eqref{lim-VP} applies directly and yields \[
    A=\|\psi-V_N^M\psi\|_{L^1_v}\to 0  \qquad\mbox{as $N\to\infty$}.
\]
    \emph{Term $(B)$.} Note that we cannot apply \eqref{lim-VP} to $(\sigma_m-1)\psi$, since this function depends on $m$ (hence on $N$); we therefore estimate this term directly. For arbitrarily fixed $0<\delta<1$, split \[
    I_m=I^\delta\cup(I_m\setminus I^\delta)\qquad \mbox{with}\qquad I^\delta:=[-1+\delta,\ 1-\delta].
    \]
    On $I^\delta$, since $\sigma_m(y)\to1$ uniformly on compact subsets (cf. \eqref{sig-prop-2}),
\[
\int_{I^\delta}|(\sigma_m(y)-1)\psi(y)|\,v(y){dy}
\le\Big(\sup_{y\in I^\delta}|\sigma_m(y)-1|\Big)\|\psi\|_{L^1_v}
\xrightarrow[N\to\infty]{}0 .
\]
    On $I_m\setminus I^\delta\subseteq\{|y|\ge1-\delta\}$, using $|\sigma_m-1|\le\C$ (cf. \eqref{sig-prop-1}),
    \[
       \int_{I_m\setminus I^\delta}|(\sigma_m(y)-1)\psi(y)|\,v(y){dy}
        \le\C\int_{\{|y|\ge1-\delta\}}|\psi(y)|\,v(y){dy}=:\C\,\omega(\delta), \qquad \C\ne\C(\delta).
    \]
Therefore,
    \begin{equation}\label{limsup}
\limsup_{N\to\infty}B\le\C\,\omega(\delta), \qquad \forall  \delta\in (0,1).      
    \end{equation} 
 On the other hand, since $\psi\in L^1_v$, the absolute continuity of the Lebesgue integral gives 
    \begin{equation}\label{omega}
    \omega(\delta):=\int_{\{|y|\ge1-\delta\}}|\psi(y)|\,v(y)\,dy  \to0\qquad \mbox{as}\quad \delta\to0.  
    \end{equation}
Thus, since $B\ge 0$ does not depend on $\delta$, by \eqref{limsup}-\eqref{omega}, we get $0\le \liminf_{N\to\infty}B\le \limsup_{N\to\infty}B\le 0$, i.e. $B\to0$.

    \emph{Term $(C)$.} For arbitrarily fixed $\delta\in (0,1)$, set
    \[
    J_1=\{s :\ y_s\in I^\delta\}, \qquad J_2=\{s :\ y_s\in I_m\setminus I^\delta\}, \qquad J_3=\{s :\ y_s\in[-1,1]\setminus I_m\}.
    \]
    By \eqref{equiVP-1},  \begin{equation}\label{eq:C-start}
    C= \big\|V_N^M[(\sigma_m-1)\psi]\big\|_{L^1_v}\le \frac\C N\left\{ \sum_{s\in J_1}+\sum_{s\in J_2}+\sum_{s\in J_3}\right\}
\big|\sigma_m(y_s)-1\big|\,\big|\psi(y_s)\big|=:S_1+S_2+S_3.
    \end{equation}

    To prove that these sums tend to zero as $N\sim m\to\infty$, the arguments are the same as for the  term B, 
    the only new point being that here we deal with Gauss--Chebyshev quadrature sums. In this regard, we recall that for any bounded and Riemann integrable function $\zeta\in L^1_v$,
    \begin{equation}\label{Gauss-conv}
  \lim_{N\to\infty}\left[\frac\pi N\sum_{s=1}^N\zeta(y_s)\right]=\int_{-1}^1\zeta(y)v(y)dy,     
    \end{equation}   
     and therefore   \begin{equation}\label{quad-sum}
    \frac 1N\sum_{s=1}^N|\zeta(y_s)|=\bigO(1), \qquad \mbox{for all bounded and Riemann integrable $\zeta$}
    \end{equation}
   \emph{(i) Sum $S_1$ }
   Since $|\psi|$ is bounded and Riemann integrable, by \eqref{quad-sum} and \eqref{MNT}
    \begin{align*}
        S_1&= \frac{\C}{N}\sum_{s\in J_1}|\sigma_m(y_s)-1|\,|\psi(y_s)|
        \le \sup_{y\in I^\delta}|\sigma_m(y)-1|\left[\frac \C N\sum_{s=1}^N|\psi(y_s)|\right]\\
        &\le \C \sup_{y\in I^\delta}|\sigma_m(y)-1|\xrightarrow[N\to\infty]{}0.
    \end{align*}
    \emph{(ii) Sum  $S_2$.} Using \eqref{sig-prop-1} and \eqref{quad-sum} with  $\zeta=\psi \cdot\chi_{[-1,1]\setminus I^\delta}$, 
    \begin{align*}
         S_2&=\frac{\C}{N}\sum_{s\in J_2}|\sigma_m(y_s)-1|\,|\psi(y_s)|
        \le   \frac{\C}{N}\sum_{y_s\in I_m\setminus I^\delta}|\psi(y_s)|\\
        &\le \C\cdot\frac{\pi}{N}\!\!\sum_{y_s\in\{|y|\ge1-\delta\}}\!\!|\psi(y_s)|
        \xrightarrow[N\to\infty]{}\C\int_{\{|y|\ge1-\delta\}}|\psi(y)|\,v(y)\,dy=\C\,\omega(\delta),
    \end{align*}
 with $\omega(\delta)$ the same quantity defined in \eqref{omega}.
Hence, reasoning as for the term B, we get $S_2\to 0$.

    \emph{(iii) Sum $S_3$.}
    Note that 
    the boundary nodes $y_s\in [-1,1]\setminus I_m$ are only $\bigO(1)$ in number. Indeed, from $1-y_s\le\C m^{-2}$ and $y_s=\cos t_s$ with $t_s:=(2s-1)\pi/(2N)\sim \sin(t_s/2)$, one gets $\sin^2(t_s/2)\le \C m^{-2}$ that gives $2s-1
    \le \C N/m=\bigO(1)$ (and symmetrically near $-1$). 
    \newline
    Also note that $\sigma_m \psi=G=m\lambda_mf$ and that, by \eqref{stima-Nevai}, $\lambda_m(y_s)\le\C[m^{-2\alpha-2}+\C m^{-2\beta-2}]$ holds,  for $w=v^{\alpha,\beta}$ and every $s\in J_3$. 
    Consequently
    \begin{align*}
        S_3&=\frac\C N\sum_{s\in J_3}|(\sigma_m\psi)(y_s)-\psi(y_s)|\le \frac\C N \sum_{s\in J_3}|G(y_s)|+\frac\C N \sum_{s\in J_3}|\psi(y_s)|\\
        &\le \C \frac mN \sum_{s\in J_3}(m^{-2\alpha-2}+ m^{-2\beta-2}) |f(y_s)|+\frac \C N \sum_{s\in J_3}\|\psi\|_\infty\\
        &\le \C \left(\max_{s\in J_3}|f(y_s)|\right)[m^{-2(\alpha+1)}+m^{-2(\beta+1)}]+\frac \C N
        \xrightarrow[N\to\infty]{}0.
    \end{align*}
In conclusion, by \emph{(i)--(iii)} we get $C\to 0$ as $N\to\infty$, and 
    collecting $(A)$--$(C)$ we obtain \eqref{tesi-G}, which together with \eqref{eq:reduce-Im} yields \eqref{lim-VP1}.
\end{proof}

\begin{remark}\label{rem-VP}
    We remark that, according with the previous proof, the rate of convergence in \eqref{lim-VP1} depends on the smoothness properties  of the function 
    $G(y)=m\lambda_m(y)f(y)$ in $L^1_v$. 
    In this regard, 
    we recall that \cite[(23)]{NevaiVert}
        \begin{equation}\label{stima-Nevai_2}
            \lambda_m^\prime(y)\sim 
            \begin{cases}
                \dfrac{w(y)}{m\varphi(y)},  &  y\in I_1=[-1+\C m^{-2},\ 1-\C m^{-2}], \\[2pt]
                w(1-\C m^{-2}),  &  y\in I_2=[1-\C m^{-2},\ 1],\\[2pt]
                w(-1+\C m^{-2}), &  y\in I_3=[-1, \ -1+\C m^{-2}],
            \end{cases}
            \quad \text{where} \ \varphi(y)=\sqrt{1-y^2}.
        \end{equation}
\end{remark}

 \item{\bf Fast computation.}\label{P5}
    In conclusion, we focus on an important aspect for  applications: how to compute $V_N^MF$ efficiently. To this aim, we recall some alternative forms of this polynomial. 

Indeed, $V_N^MF$ represents a projection of  $F$ onto  the polynomial space
\begin{equation}\label{VP-basis}
  \V_N^M=\operatorname{span}\{\Phi_{N;s}^M\:\ s=1,\ldots, N\},  
\end{equation}
and formula \eqref{VP} yields such projection in term of the interpolating basis \eqref{fund-VP}, more explicitly
\begin{equation}\label{VP-cr}
    V_N^MF(y)={\frac\pi N}\sum_{r=0}^{N+M-1}c_r\mu_r \tilde T_r(y), \qquad c_r=\sum_{s=1}^NF(y_s)\tilde T_r(y_s).
\end{equation}

However, other bases of $\V_N^M$ are known in the literature, in particular orthogonal bases with respect to the scalar product
\[
\langle f, g \rangle_{L^2_v}=\int_{-1}^1f(y)g(y)v(y)dy, \qquad v(y)=\frac 1{\sqrt{1-y^2}}.
\]
In fact, it is known that \cite[Thm.4.3]{Woula_Uniform}
\begin{equation}\label{basis-q}
    \V_N^M=\operatorname{span}\{q_{r,N}^M\:\ r=0,\ldots, N-1\},  
\end{equation}
where 
\begin{equation}\label{q}
q_{r,N}^M(y)=\left\{\begin{array}{lr}
  \tilde T_r(y),   &  0\leq r\leq N-M,\\
  \mu_r \tilde T_r(y)-\mu_{2N-r}\tilde T_{2N-r}(y),   & 
 \qquad  N-M<r<N+M,
\end{array}\right.   
\qquad r=0,\ldots, N-1,
\end{equation}
with
\[
\langle q_{r,N}^M,\ q_{s,N}^M \rangle_{L^2_v}=\delta_{r,s} \nu_r,\qquad \nu_r:=
\left\{\begin{array}{lr}
  1   &  0\leq r\leq N-M,\\
  \frac{M^2+(N-r)^2}{2M^2}  & N-M<r<N+M.
\end{array}\right.
\]
Moreover, in the recent paper  \cite{TB-ORTOwave}, 
it has been proved that
\begin{equation}\label{basis-OVP}
    \V_N^M=\operatorname{span}\{\tilde \varphi_{N,s}^M\:\ s=1,\ldots, N\},  
\end{equation}
where the polynomials
\[
    \tilde\varphi_{N,s}^M(y)=\sqrt{\frac\pi N}\sum_{r=0}^{N-1}\frac{p_r(y_s)}{\sqrt{\nu_r}}q_{r,N}^M(y),\qquad s=1,\ldots, N,
\]
form an orthonormal and localized polynomial basis satisfying [...]
\begin{equation*}
    \langle\tilde\varphi_{N,s}^M, \ \tilde\varphi_{N,r}^M\rangle_{L^2_v}=\delta_{r,s} \qquad \text{and} \qquad |\tilde\varphi_{N,s}^M(y)|\leq\C \frac{\sqrt{N}}{1+NM|y-y_s|^2}, \qquad |y|\leq 1,
\end{equation*}
with $\C\ne \C(y,s, N,M)$.

Consequently,
in terms of the basis \eqref{basis-q}, we have
\begin{equation}\label{VP-q}
    V_N^MF(y)=\sqrt{\frac\pi N}\sum_{r=0}^{N-1}c_r q_{r,N}^M(y), \qquad c_r=\sqrt{\frac \pi N}\sum_{s=1}^NF(y_s)\tilde T_r(y_s),
\end{equation}
whereas, in terms of the basis \eqref{basis-OVP}, we have
\begin{equation}\label{VP-OVP}
     V_N^MF(y)=\sqrt{\frac\pi N}\sum_{s=1}^{N}\tilde c_s \tilde\varphi_{s,N}^M(y), \qquad \tilde c_s=\sqrt{\frac \pi N}\sum_{k=1}^NF(y_k)\tilde \varphi_{N,s}^M(y_k).
\end{equation}

Comparing the previous formulas of $V_N^MF$, we note that the expansion \eqref{VP-q}  is based on less addenda than \eqref{VP-cr} and is simpler than \eqref{VP-OVP}. Moreover,  the following discrete cosine transforms (in Matlab known as DCT of type 2 and 3)
\[
b_r=\sum_{s=1}^N a_s\tilde T_{r}(y_s), \quad r=0,\ldots,N-1,\qquad \qquad a_s=\sum_{r=0}^{N-1} b_r\tilde T_{r}(y_s), \quad s=1,\ldots,N,
\]
can be applied for an efficient computation.

Finally, we remark that the bases \eqref{VP-basis} and \eqref{VP-OVP} have recently been identified as scaling functions, and the corresponding interpolating and orthonormal wavelets have been derived, together with efficient DCT-based decomposition and reconstruction algorithms \cite{Cotronei2025, FT-wave, TB-ORTOwave}. Therefore, the representations \eqref{VP} and \eqref{VP-OVP} naturally provide the foundation for a wavelet multiresolution analysis, enabling sparse representations of the underlying functions. 
\end{enumerate} 
 
\subsection{The approximate equation}
Instead of \eqref{FIE-project}, the approximate equation considered by RDC method is the following
\begin{equation}\label{FIE-rdc}
    (I-\tilde H_n^m)f(x)=g_n^m(x), \qquad |x|\leq 1,  
\end{equation}
where, as usual, $f$ denotes the unknow function of the equation, $g_n^m$ is always given by \eqref{gnm}, with the integers $0<m<n$ such that $m\sim n$, and the operator $\tilde H_n^m$ is defined by
\begin{equation}\label{tilde-Hnm}
    \tilde     H_n^mf(x)=\sum_{i=1}^{n+1}\left[\int_{-1}^1 V_N^M F_{i}(y) k_m(y,y)f(y)w(y)dy\right]r_{n,i}^m(x), \qquad |x|\leq 1,
\end{equation}
where $F_i(y):=h(x_i,y)\lambda_m(y)$, and the positive integers $M<N$ satisfy \eqref{cond-1}-- \eqref{cond-2}.
\begin{proposition}\label{prop-Htilde}
 Under the previous setting and assumptions, let \eqref{hp1-h}--\eqref{hp2-h} hold and, for all $|x|\le 1$, let $h(x,y)w(y)\varphi(y)$ be a bounded function of the variable $y\in[-1,1]$. Then the map $\tilde H_n^m:C^0\to C^0$ is uniformly bounded w.r.t. $n$, i.e., we have
 \[
     \|\tilde H_n^m f\|_\infty\leq \C \|f\|_\infty, \qquad \forall f\in C^0,\qquad\qquad \C\ne \C(n,N,f).
 \]
\end{proposition}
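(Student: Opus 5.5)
The plan is to bound $\tilde H_n^m f$ in two stages: first reduce it to its values at the Jacobi nodes through the stability of $R_n^m$, then bound each node value using the Marcinkiewicz-type estimate \eqref{MarciVP} of Theorem \ref{prop-MarciVP}.

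\emph{Reduction to the nodes.} By \eqref{tilde-Hnm}, $\tilde H_n^m f=R_n^m\Psi$ for any function $\Psi$ satisfying
\[
\Psi(x_i)=\int_{-1}^1 V_N^MF_i(y)\,k_m(y,y)f(y)w(y)\,dy, \qquad i=1,\ldots,n+1.
\]
Since $m\sim n$, the bound \eqref{bound-R} of Proposition \ref{prop-LC} gives
\[
\|\tilde H_n^m f\|_\infty\le \C\max_{1\le i\le n+1}|\Psi(x_i)|.
\]
It therefore suffices to show that $|\Psi(x_i)|\le \C\|f\|_\infty$ with $\C$ independent of $i,n,N$.

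\emph{Bounding each node value.} For fixed $i$ we have
\[
|\Psi(x_i)|\le \|f\|_\infty\int_{-1}^1|V_N^MF_i(y)|\,k_m(y,y)w(y)\,dy .
\]
The function $F_i(y)=h(x_i,y)\lambda_m(y)$ has exactly the form $F=\tilde f\lambda_m$ required in \eqref{MarciVP}, with $\tilde f=h(x_i,\cdot)$. This $\tilde f$ lies in $L^1_w$ by \eqref{hp1-h}. To apply Theorem \ref{prop-MarciVP} we also need $m\sim N$. This follows from \eqref{cond-1}--\eqref{cond-2}, which give $N\le n-m\le n\sim m$, provided one additionally assumes $N\sim n$ (implicit in ``$\C\ne\C(n,N)$''). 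I would state this regime explicitly. Then
\[
\int_{-1}^1|V_N^MF_i|\,k_m w\,dy\le \C\sum_{s=1}^N|h(x_i,y_s)|\,\lambda_m(y_s).
\]

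\emph{Main step: bounding the quadrature-type sum.} Here the hypothesis that $h(x,\cdot)w\varphi$ is bounded is used. I read it as uniform in $x$, i.e. $\sup_{x,y}|h(x,y)|w(y)\varphi(y)\le \C$. Uniformity follows from the bounded kernel combined with \eqref{hp2-h} and compactness of $[-1,1]$, or can simply be taken as part of the hypothesis.

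For nodes $y_s\in I_m$, the estimate \eqref{stima-Nevai} gives $\lambda_m(y_s)\sim w(y_s)\varphi(y_s)/m$, so each such term is at most $\C/m$. Summing over at most $N$ nodes gives $\C N/m\le\C$.

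For the $\bigO(1)$ boundary nodes $y_s\notin I_m$, counted as in part (iii) of the proof of Theorem \ref{prop-MarciVP}, I would use $|h|\le\C$, since the kernel is bounded. Together with $\lambda_m(y_s)\le \C(m^{-2\alpha-2}+m^{-2\beta-2})$ from \eqref{stima-Nevai}, this makes their contribution $\bigO(1)$ as well. Alternatively one can use $|h|\le \C/(w\varphi)$ together with the fact that at these nodes $w(y_s)\varphi(y_s)$ is comparable to $w(\pm(1-\C m^{-2}))/m$.

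The delicate point is exactly this endpoint bookkeeping: the constants must not depend on $i$, and the cases $\alpha$ or $\beta<0$ must be handled. Using boundedness of $h$ near the endpoints avoids both issues. Collecting the two stages yields $\|\tilde H_n^m f\|_\infty\le\C\|f\|_\infty$.
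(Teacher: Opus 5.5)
Your first two stages match the paper's proof: you reduce to the nodes via \eqref{bound-R}, apply \eqref{MarciVP} with $h(x_i,\cdot)\in L^1_w$, and split $\sum_s\lambda_m(y_s)|h(x_i,y_s)|$ into interior nodes and $\bigO(1)$ boundary nodes using \eqref{stima-Nevai}. Your remark that $m\sim N$ is an extra requirement, not implied by \eqref{cond-1}--\eqref{cond-2}, is correct; the paper uses that regime implicitly. Your boundary-node estimate is also fine.

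\textbf{The gap is in the interior step.} Bounding each interior term by $\C/m$ needs $\sup_{x,y}|h(x,y)|w(y)\varphi(y)<\infty$. The proposition only assumes boundedness for each fixed $x$, and your claim that uniformity follows from the boundedness of $h$, \eqref{hp2-h} and compactness is false in general.
\begin{itemize}
\item If $\alpha,\beta\ge-1/2$, then $w\varphi$ is bounded and uniformity is immediate from $|h|\le\|h\|_\infty$; \eqref{hp2-h} plays no role.
\item If, say, $\alpha<-1/2$, then \eqref{hp2-h} is a continuity condition in $L^1_w$. It controls integrals, not suprema against the unbounded factor $w\varphi$.
\end{itemize}
Here is a counterexample for the second case. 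Take $h(x,y)=\chi_{[1-|x|,\,1-|x|/2]}(y)$ for $x\neq0$ and $h(0,\cdot)\equiv0$. This $h$ is bounded and satisfies \eqref{hp1-h}--\eqref{hp2-h}; at $x=0$ this holds because $\int_{1-|x|}^1 w\to0$. Moreover $h(x,\cdot)w\varphi$ is bounded for every fixed $x$. Yet $\sup_y|h(x,y)|w(y)\varphi(y)\ge c\,|x|^{\alpha+1/2}\to\infty$ as $x\to0$. So, as written, your argument proves the statement only under a strengthened hypothesis.

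The paper avoids any sup bound in $x$. It turns the interior part into the Gauss--Chebyshev sum $\frac{\pi}{N}\sum_s|h(x_i,y_s)|w(y_s)\varphi(y_s)$. It then controls this sum by $\max_x\int_{-1}^1|h(x,y)|w(y)\,dy$, which is finite because \eqref{hp1-h}--\eqref{hp2-h} make this function continuous. In other words, it uses only the integral-level uniformity that \eqref{hp2-h} actually provides.

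Within your own framework there is an even simpler repair: use $|h(x_i,y_s)|\le\|h\|_\infty$ and show $\sum_{s=1}^N\lambda_m(y_s)=\bigO(1)$.
\begin{itemize}
\item By \eqref{stima-Nevai}, the interior nodes contribute at most $\frac{\C}{m}\sum_s w(y_s)\varphi(y_s)$.
\item Since $w(\cos t)\varphi(\cos t)=2^{\alpha+\beta+1}\sin^{2\alpha+1}(t/2)\cos^{2\beta+1}(t/2)$ with $2\alpha+1,2\beta+1>-1$, and the $t_s=(2s-1)\pi/(2N)$ are equispaced midpoints, the sums $\frac{\pi}{N}\sum_s w(y_s)\varphi(y_s)$ stay bounded. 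Hence the interior contribution is at most $\C N/m\le\C$.
\item The boundary nodes add $\bigO(m^{-2\alpha-2}+m^{-2\beta-2})$.
\end{itemize}
This bound is uniform in $i$ and needs no assumption on $hw\varphi$ at all. It also sidesteps a point the paper's route passes over: \eqref{Gauss-conv} is a statement about a fixed function, whereas $h(x_i,\cdot)w\varphi$ varies with $n$.
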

\begin{proof}
In what follows, we always assume that $\C$ indicates a positive constant independent of the parameters in which we are interested, namely $\C\ne \C(n,m,N,M,f,x_i)$.\newline
    Using Prop. \ref{prop-LC} and \eqref{MarciVP}, for all $f\in C^0$, we get
    \begin{align*}
         \|\tilde H_n^mf\|_\infty&\leq \C  \max_{1\leq i\leq n+1}
        \left[\int_{-1}^1 |V_N^M F_{i}(y)|\  |f(y)| \ k_m(y,y) w(y)dy\right] \\
        &\leq    \C \|f\|_\infty \max_{1\leq i\leq n+1}
        \left[\int_{-1}^1 |V_N^M F_{i}(y)|\  k_m(y,y) w(y)dy\right]\\
        &\leq    \C \|f\|_\infty \max_{1\leq i\leq n+1}\left[\sum_{s=1}^N \lambda_m(y_s)|h(x_i,y_s)|\right],
    \end{align*}
    and the statement follows once we show that
    \begin{equation}\label{tesi-prop43}
 \max_{1\leq i\leq n+1}\left[\sum_{s=1}^N \lambda_m(y_s)|h(x_i,y_s)|\right]=\bigO(1).       
    \end{equation}
    To this end we note that, by \eqref{stima-Nevai}, for the interior nodes $y_s\in I_1$ we have $\lambda_m(y_s)\le \C\,w(y_s)\varphi(y_s)/m$, whereas for boundary nodes we get $\lambda_m(y_s)\le \C m^{-2\alpha-2}$ if $y_s\in I_2$, and $\lambda_m(y_s)\le \C m^{-2\beta-2}$ if $y_s\in I_3$. Hence, taking into account that, as already observed in the proof of Thm. \ref{prop-MarciVP} (see {\it (iii)}), the number of boundary nodes is $\bigO(1)$, we get 
    \begin{align*}
        \sum_{s=1}^N \lambda_m(y_s)|h(x_i,y_s)|&\le  \frac \C m\sum_{y_s\in I_1}|h(x_i,y_s)|\,w(y_s)\varphi(y_s)+\C m^{-2\alpha-2}\sum_{y_s\in I_2}|h(x_i,y_s)|\\
        &\quad + \C m^{-2\beta-2}\sum_{y_s\in I_3}|h(x_i,y_s)|\\
        &\le \frac \C N\sum_{y_s\in I_1}|h(x_i,y_s)|\,w(y_s)\varphi(y_s)+\C \left(m^{-2\alpha-2}+m^{-2\beta-2}\right)\\
        &\le \C\frac \pi N\sum_{s=1}^N|h(x_i,y_s)|\,w(y_s)\varphi(y_s)+o(1),\qquad \quad i=1,\ldots,n+1,
    \end{align*}
    and \eqref{tesi-prop43} follows by applying \eqref{Gauss-conv} to the last Gauss--Chebyshev quadrature sum (
    $\zeta(y)=h(x,y)w(y)\varphi(y)$ is bounded and Riemann integrable in $L^1_v$) that yields
    \[
    \frac\pi N\sum_{s=1}^N |h(x_i,y_s)|\,w(y_s)\varphi(y_s)\le \C \int_{-1}^1 |h(x_i,y)|w(y)\,dy\le \C \max_x \int_{-1}^1 |h(x,y)|w(y)\,dy=\bigO(1),
    \]
    where the existence of the maximum follows from the Extreme Value Theorem, since \eqref{hp1-h}--\eqref{hp2-h} imply that $\Phi(x)=\int_{-1}^1 |h(x,y)|w(y)dy$ is a continuous function on $[-1,1]$.
\end{proof}

For the approximate equation \eqref{FIE-rdc} we state the analogous of Thm. \ref{th-collo}.
\begin{theorem}\label{th-rdc}
	Assume that $\operatorname{ker}(I-H)=\{0\}$ and let the hypotheses of Proposition \ref{prop-Htilde} be satisfied. Moreover, let $f^*\in C^0$ be the unique solution of \eqref{FIE} for a given $g\in C^0$. Then, for all sufficiently large $n\in\NN$ and $0<m<n$ such that $n\sim m\sim (n-m)$, the approximate equation \eqref{FIE-rdc} admits a unique stable solution $\tilde f_n^m$, which belongs to the space $\R_n^m$, and satisfies
    \begin{equation}\label{lim-rdc}
		\lim_{n\to \infty}
        \|f^*-\tilde f^m_n\|_\infty =0.
	\end{equation}
Furthermore, 
	\begin{equation}\label{lim-cond-rdc}
		\lim_{ n\to\infty}
        \kappa(I-\tilde H^m_n)=\kappa(I-H).
	\end{equation}
\end{theorem}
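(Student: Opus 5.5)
The plan is to mimic the proof of Theorem \ref{th-collo}, with $\tilde H_n^m$ in place of $H_n^m$. Norm convergence $\|H-\tilde H_n^m\|\to0$ is not available, because $\tilde H_n^m$ uses the discrete VP approximation. I will therefore use the collectively compact operator theory (Anselone, see \cite[Ch.~4]{Atkinson_1997}). The aim is to show three facts.

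\emph{(a)} The family $\{\tilde H_n^m\}$ is uniformly bounded; this is Proposition \ref{prop-Htilde}.

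\emph{(b)} For every $f\in C^0$, $\tilde H_n^m f\to Hf$ pointwise in norm.

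\emph{(c)} The family is collectively compact. For this I would instead use the weaker but sufficient property that $\|(\tilde H_n^m-H)\tilde H_n^m\|\to0$ and $\|(\tilde H_n^m-H)H\|\to0$.

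Given these, the standard Anselone argument applies. The operator
\[
I+(I-H)^{-1}\tilde H_n^m
\]
is an approximate inverse of $I-\tilde H_n^m$, and the Neumann series then gives invertibility of $I-\tilde H_n^m$ with uniformly bounded inverse for large $n$. This yields the unique stable solution $\tilde f_n^m=g_n^m+\tilde H_n^m\tilde f_n^m\in\R_n^m$.

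For (b), I would write
\[
\tilde H_n^m f-Hf=\big(\tilde H_n^m f-H_n^m f\big)+\big(H_n^m f-Hf\big).
\]
The second term tends to zero by \eqref{conv-R}, since $Hf\in C^0$. For the first term, Proposition \ref{prop-LC} bounds it by
\[
\C\max_i\Big|\int_{-1}^1\big(h(x_i,y)f(y)-V_N^M F_i(y)\,k_m(y,y)f(y)\big)w(y)\,dy\Big|.
\]
Applying Theorem \ref{prop-MarciVP}, specifically \eqref{lim-VP1}, with the function $h(x_i,\cdot)$ (which has $hw\varphi$ bounded) would give convergence to zero. There is a subtlety: $f$ multiplies inside, and the nodes $x_i$ vary with $n$. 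To handle it I would apply \eqref{lim-VP1} to $\tilde h=h(x,\cdot)$ and use the uniform bound \eqref{MarciVP} together with an $\varepsilon$-approximation of $f$ by polynomials. Uniformity in $x_i$ needs equicontinuity of $x\mapsto h(x,\cdot)$ in $L^1_w$, which is \eqref{hp2-h} on the compact $[-1,1]$, combined with a finite $\delta$-net of points $x$.

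For (c), I would use the set $\{\tilde H_n^m f:\|f\|_\infty\le1\}$. Its values lie in $\R_n^m$ with coefficients bounded by $\C$. To see relative compactness, I would show that for bounded sets $B$ the set $\{Hu: u\in B\}$ is compact (Arzelà–Ascoli via \eqref{hp2-h}) and that the convergence in (b) is uniform on compact sets; this is automatic from (a) and (b). That settles $\|(\tilde H_n^m-H)H\|\to0$. For $\|(\tilde H_n^m-H)\tilde H_n^m\|\to0$, I would decompose
\[
\tilde H_n^m=R_n^m\tilde K_n,
\]
where $\tilde K_n f(x)$ is the discretized integral. I would then show that $\{\tilde K_n f\}$ is equicontinuous, using \eqref{hp2-h} transferred through \eqref{MarciVP} and the Gauss–Chebyshev sum bound from the proof of Proposition \ref{prop-Htilde}. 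Near-best convergence of $R_n^m$ then upgrades this to collective compactness. This step is the main obstacle: controlling the equicontinuity of the discrete sums uniformly in $N$ requires a discrete analogue of \eqref{hp2-h}, and I expect to need Riemann integrability of $h(x,\cdot)w\varphi$ uniformly in $x$.

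Finally, convergence \eqref{lim-rdc} follows from
\[
f^*-\tilde f_n^m=(I-\tilde H_n^m)^{-1}\big[(g-g_n^m)+(H-\tilde H_n^m)f^*\big],
\]
using (b), \eqref{conv-R} and the uniform bound on the inverse. For \eqref{lim-cond-rdc}, I would show $\|I-\tilde H_n^m\|\to\|I-H\|$ and $\|(I-\tilde H_n^m)^{-1}\|\to\|(I-H)^{-1}\|$. The second limit follows from the Anselone identities. The first needs care since only strong convergence holds. I would use $\|I-H\|\le\liminf\|I-\tilde H_n^m\|$ from strong convergence, and the matching $\limsup$ from collective compactness: pick near-maximizers $f_n$; then $\tilde H_n^m f_n$ has convergent subsequences, which can be compared with $Hf_n$.
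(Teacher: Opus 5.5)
There is a genuine gap, and it comes from your opening premise. Norm convergence $\|H-\tilde H_n^m\|\to0$ \emph{is} available, and it is exactly what the paper proves (cf.\ \eqref{tesi-rdc}).

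The key point is that $\tilde H_n^m$ in \eqref{tilde-Hnm} discretizes only the kernel. The argument $f$ is still integrated exactly against $V_N^MF_i(y)\,k_m(y,y)\,w(y)$. So the bound you write in (b) closes at once by taking $\|f\|_\infty$ out of the integral, with no polynomial approximation of $f$:
\[
\|(H_n^m-\tilde H_n^m)f\|_\infty\le \|R_n^m\|\,\|f\|_\infty\max_{1\le i\le n+1}\int_{-1}^1\big|h(x_i,y)-V_N^MF_i(y)\,k_m(y,y)\big|\,w(y)\,dy .
\]
The right-hand side does not depend on $f$, and it tends to zero by \eqref{lim-VP1} applied to $h(x_i,\cdot)$. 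Together with $\|H-H_n^m\|\to0$ from Theorem \ref{th-collo}, this gives $\|H-\tilde H_n^m\|\to0$. From there, the factorization \eqref{eq:tmp1}, the Neumann series and the perturbation identities used for \eqref{lim-cond} carry over verbatim. Neither Anselone's theory nor collective compactness is needed.

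Having discarded norm convergence, your route does not close.
\begin{itemize}
\item Step (c), which you call the main obstacle, is left unproved, so stability is not established.
\item More seriously, \eqref{lim-cond-rdc} cannot be obtained from strong convergence plus collective compactness. Your $\limsup$ step needs $\|(\tilde H_n^m-H)f_n\|_\infty\to0$ along the near-maximizers $f_n$, which is exactly what strong convergence does not give. Collective compactness yields convergent subsequences of $\tilde H_n^mf_n$, but it does not identify their limits with those of $Hf_n$. For example, take $P_nf=(f(a_n)-f(b_n))\phi$ with distinct $a_n,b_n\ne a$ tending to $a$. This family is collectively compact and strongly null, yet $\|I-P_n\|\ge1+2\|\phi\|_\infty$ for large $n$.
\item Likewise, the Anselone identities give uniformly bounded inverses that converge strongly. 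That only yields $\liminf\|(I-\tilde H_n^m)^{-1}\|\ge\|(I-H)^{-1}\|$, not the limit you claim.
\end{itemize}

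Your concern about uniformity over the $n$-dependent nodes $x_i$ is legitimate; the paper applies \eqref{lim-VP1} to $h(x_i,\cdot)$ without comment. However, a $\delta$-net plus \eqref{hp2-h} does not settle it. After \eqref{MarciVP}, the VP part becomes the discrete sum $\sum_s\lambda_m(y_s)|h(x,y_s)-h(x',y_s)|$, and \eqref{hp2-h} does not control it. Some pointwise equicontinuity of $h$ in $x$ is needed, which is the same obstruction you met in (c).
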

\begin{proof}
If we prove that
\begin{equation}\label{tesi-rdc}
  \lim_{n\to\infty}\|H-\tilde H_n^m\|=0,\qquad \forall m\sim n\sim (n-m), 
\end{equation}
then the proof follows analogously to that of Theorem   \ref{th-collo}.

To prove \eqref{tesi-rdc}, we note that
\[
    \|H-\tilde H_n^m\|\leq \|H-H_n^m\|+\|H_n^m-\tilde H_n^m\|,
\]
where, by Theorem \ref{th-collo}, the first addendum tends to zero as $n\to\infty$ and $m\sim n\sim (n-m)$. Hence. let us prove that the same holds for the second addendum.

Recalling the definitions (cf. \eqref{Hnm}, \eqref{tilde-Hnm}, \eqref{LC-Sum}) and using \eqref{bound-R}, $\forall \zeta\in C^0 $, we have
\begin{align*}
    \|H_n^m\zeta-\tilde H_n^m \zeta\|_\infty &= 
\max_{|x|\le 1}\left|\sum_{i=1}^{n+1}r_{n,i}^m(x)\int_{-1}^1\left[h(x_i,y)-V_N^MF_i(y)k_m(y,y)\right] \zeta(y)w(y)dy\right|\\   
    &\leq    \|R_n^m\| \|\zeta\|_\infty \max_{1\leq i\leq n+1}
    \int_{-1}^1 \left|h(x_i,y)-V_N^MF_i(y)k_m(y,y)\right| w(y)dy\\
    &\leq    \C \|\zeta\|_\infty \max_{1\leq i\leq n+1}
    \int_{-1}^1 \left|h(x_i,y)-V_N^MF_i(y)k_m(y,y)\right| w(y)dy,
\end{align*}
Consequently, due to the assumptions on $h$, the statement follows by applying \eqref{lim-VP1} with  $f(y)=h(x_i,y)$.
\end{proof}

\subsection{The associated  linear system}
In the following, we suppose that \eqref{FIE-rdc} has a unique solution $\tilde f_n^m$ and see how to compute it. To this aim, since $\tilde f_n^m\in \R_n^m$, we  write 
\begin{equation}\label{ftilde}
    \tilde f_n^m(x):=\sum_{j=1}^{n+1}\tilde f_j r_{n,j}(x), \qquad |x|\leq 1,
\end{equation}
and compute the unknown coefficients $\tilde f_j=\tilde f_n^m(x_j)$, by solving the linear system resulting from the collocation
\begin{equation}\label{collo}
    \tilde f_n^m(x_j) -\tilde H_n^m \tilde f_n^m(x_j)=g_n^m(x_j), \qquad j=1,\ldots, n+1. 
\end{equation}
Let us derive a more  explicit form of this linear system. Firstly, note that, by \eqref{interp},
\begin{equation}\label{gnm-j}
    g_n^m(x_j)=R_n^mg(x_j)=g(x_j), \qquad j=1,\ldots, n+1.
\end{equation} 
On the other hand, by definition (cf. \eqref{tilde-Hnm},  \eqref{ftilde} and \eqref{rnm}),  we have
\begin{align*}
    \tilde H_n^m\tilde f_n^m(x_j)&=\int_{-1}^1 V_N^M F_j(y)k_m(y,y)\tilde f_n^m(y)w(y)dy\\
    &= \int_{-1}^1 V_N^M F_j(y) k_m(y,y)\left[\sum_{i=1}^{n+1} \tilde f_i \lambda_i \frac{k_n(x_i,y)k_m(x_i,y)}{k_m(y,y)}\right] w(y)dy
    \\
    &= \sum_{i=1}^{n+1} \tilde f_i \lambda_i \int_{-1}^1  \left[ V_N^M F_j(y) k_m(x_i,y)\right] k_n(x_i,y) w(y)dy,
\end{align*}
where $V_N^M F_j(y) k_m(x_i,y)$ is a polynomial of degree at most $n$ for all indices $i$ and $j$ (due to \eqref{cond-1}). Hence, by the reproducing property \eqref{inva}, we obtain
\begin{equation}\label{Hnmj}
    \tilde H_n^m\tilde f(x_j)= \sum_{i=1}^{n+1} \tilde f_i  \lambda_i  V_N^M F_j(x_i)k_m(x_i,x_i), \qquad j=1,\ldots, n+1,
\end{equation}
where we point out that the polynomials $V_N^M F_j(x_i)$, $i,j=1,\ldots, n+1$ can be efficiently computed as discussed in Subsection \ref{subsec:choice_Polynomials} (item \ref{P5}). \\
It follows that the linear system \eqref{collo} can be expressed explicitly as
\begin{equation}\label{eq:linear_system_components}
	\tilde{f}_j - \sum_{i=1}^{n+1}\tilde{f}_i \lambda_i  V_N^M F_j(x_i) k_m(x_i,x_i)=g(x_j), \qquad j=1,\dots,n+1,
\end{equation}
i.e., setting $\bm{\tilde f} =[
			\tilde f_1, \ldots, \tilde f_{n+1}]^T$, $\bm{g} = [
			g(x_1), \ldots, g(x_{n+1})]^T$, and 
\begin{align*}
    \bm{D}&=\diag[\lambda_i k_m(x_i,x_i)]_{i=1,...,n+1}, \\
    \mathcal{V}&=[V_{j,i}]_{j,i=1,...,n+1}, \qquad
    V_{j,i}=V_N^MF_j(x_i), \qquad F_j(x)=\lambda_m(x)h(x_j,x),
    \qquad j,i=1,...,n+1,
\end{align*}
 system \eqref{eq:linear_system_components} can be compactly written as
\begin{equation}\label{eq:Metod_System}
	(\mathcal{I}-\mathcal{\tilde A})\bm{\tilde{f}}=\bm{g}, \qquad \text{where} \qquad
    \qquad \mathcal{\tilde A}=\mathcal{VD}. 
\end{equation}

\section{Numerical Tests}\label{sec:Numerical_Tests}
In this section, we present some test cases to illustrate the properties of the MN and RDC methods introduced in Sections \ref{sec:RC_Method} and \ref{sec:RDC_Method}, respectively. Let $\{\zeta_l\}_{1\leq l \leq N_\zeta}$ denote a uniform discretization of the domain $[-1,1]$ In what follows, we take $N_\zeta=10^3$ and adopt the notation RDC$_\theta$ to specify the value of $m=\left\lfloor{\theta n}\right\rfloor.$ We compare the proposed RDC and MN methods with the results obtained via the variants of the Nystr{\"o}m methods (see, e.g., \cite[Chapter 4]{Atkinson_1997} and \cite{Atkinson_Matlab}) obtained by using Gauss \cite{Gautschi}, anti Gauss \cite{Patricia,Laurie_1996}, averaged Gauss and weighted averaged Gauss \cite{Luisa_Avg,SpalevicI,SpalevicII} quadrature rules.
To assess the convergence behavior of the RDC method, we define the uniform and mean errors as follows
\begin{equation}\label{eq:Errors}
    \varepsilon^n(\zeta_l)=\left|f^{ref}(\zeta_l)-\tilde{f}^n(\zeta_l)\right|, \qquad\qquad 
    E^n_{\infty}=\max_l \varepsilon^n(\zeta_l), \qquad\qquad 
    \mathbb{E}^n=\dfrac{1}{N_\zeta}\sum_{l=1}^{N_\zeta} \varepsilon^n(\zeta_l),
\end{equation}
where $\tilde{f}^n$ denotes the approximate solution achieved by using $n+1$ nodes, and $f^{ref}(\zeta_l)$ denotes the reference solution, which is either known analytically or computed symbolically using Wolfram Mathematica with high precision. 
The comparison is conducted primarily in terms of the errors defined in \eqref{eq:Errors}, while also taking into account the condition number $\kappa_\infty$ of the linear system arising from each method. All the simulations have been performed using MATLAB R2025a on an Intel(R) Core(TM) i9-14900KF processor operating at 3200 MHz.

\begin{test}\label{test:Ex_0_12} 
With the aim of investigating the convergence properties of the MN and RDC methods, we consider the FIE~\eqref{FIE} with
\begin{equation}\label{eq:Ex_0_Details}
   w(x) = \dfrac{1}{\sqrt[4]{(1 - x^2)}}, \qquad\qquad h(x,y) = \left(\dfrac{1 - x^2}{\sin(4\pi y) + 2} \right) \cdot 
    \begin{cases}
       0, & \text{for }  y = 1,  \\
         y, & \text{for } y \in  [0,1),  \\
        1  + y, & \text{for } y \in  [-1,0).
    \end{cases}
\end{equation}
The function $g(x)=-1 + x^2 + (1 - x^2)^{1/4} \, \bigl( 2 + \sin(4\pi x) \bigr)$ is chosen so that the exact solution is given by $f^*(x) = ( 2 + \sin(4\pi x) ) \sqrt[4]{(1 - x^2)^3}$. \newline Figures~\ref{fig:Ar_Ex_12_Bis_Results_MN} and~\ref{fig:Ar_Ex_12_Bis_Results_RDC} report the errors obtained with the MN scheme~\eqref{Nystrom-sist}--\eqref{fnm-MN} and the RDC method~\eqref{ftilde}--\eqref{eq:Metod_System}, respectively, for several values of $\theta$ and increasing $n$. 
The MN scheme exhibits a linear convergence, with the error decaying as $n^{-1}$, and yields results comparable to those of the classical Nystr\"om method (based on the same Gaussian rule), especially for large values of~$n$.
\begin{figure}[tbp]
    \centering
    \includegraphics[width=0.8\linewidth]{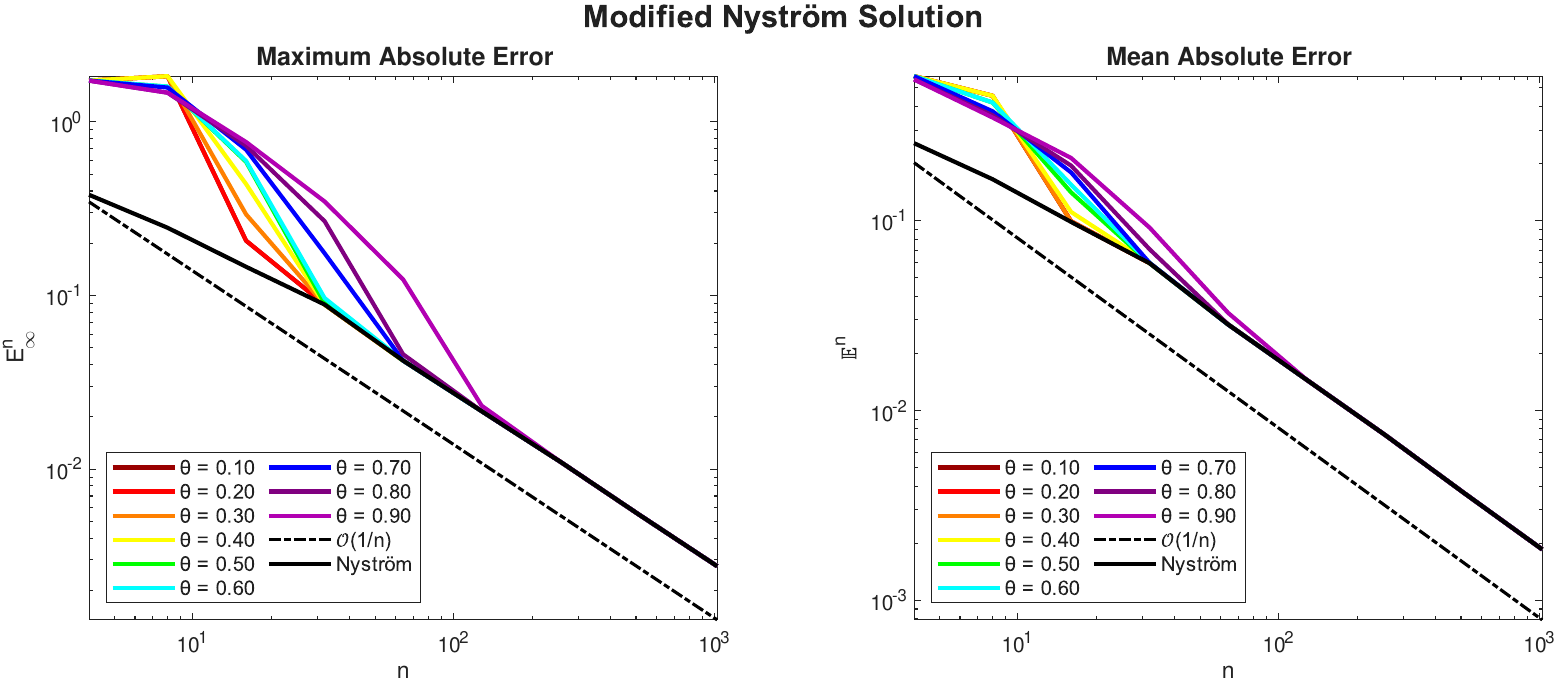}
    \caption{Experimental convergence of the MN scheme for Test~\ref{test:Ex_0_12}.}
    \label{fig:Ar_Ex_12_Bis_Results_MN}
\end{figure}
On the contrary, the RDC method achieves an error decay of approximately $n^{-2}$ for almost all values of~$\theta$, significantly outperforming both the classical and the Modified Nystr\"om schemes. Table~\ref{tab:Ar_Ex_0_Ord_Convergence} reports the experimental orders of convergence in the uniform norm, computed as
\begin{equation*}
    p_\theta=-\log_2\!\left(\frac{E_\infty^n}{E_\infty^{2n}}\right),
\end{equation*}
together with the mean convergence rate $\mathbb{E}(p_\theta)$.
\begin{figure}[tbp]
    \centering
    \includegraphics[width=0.8\linewidth]{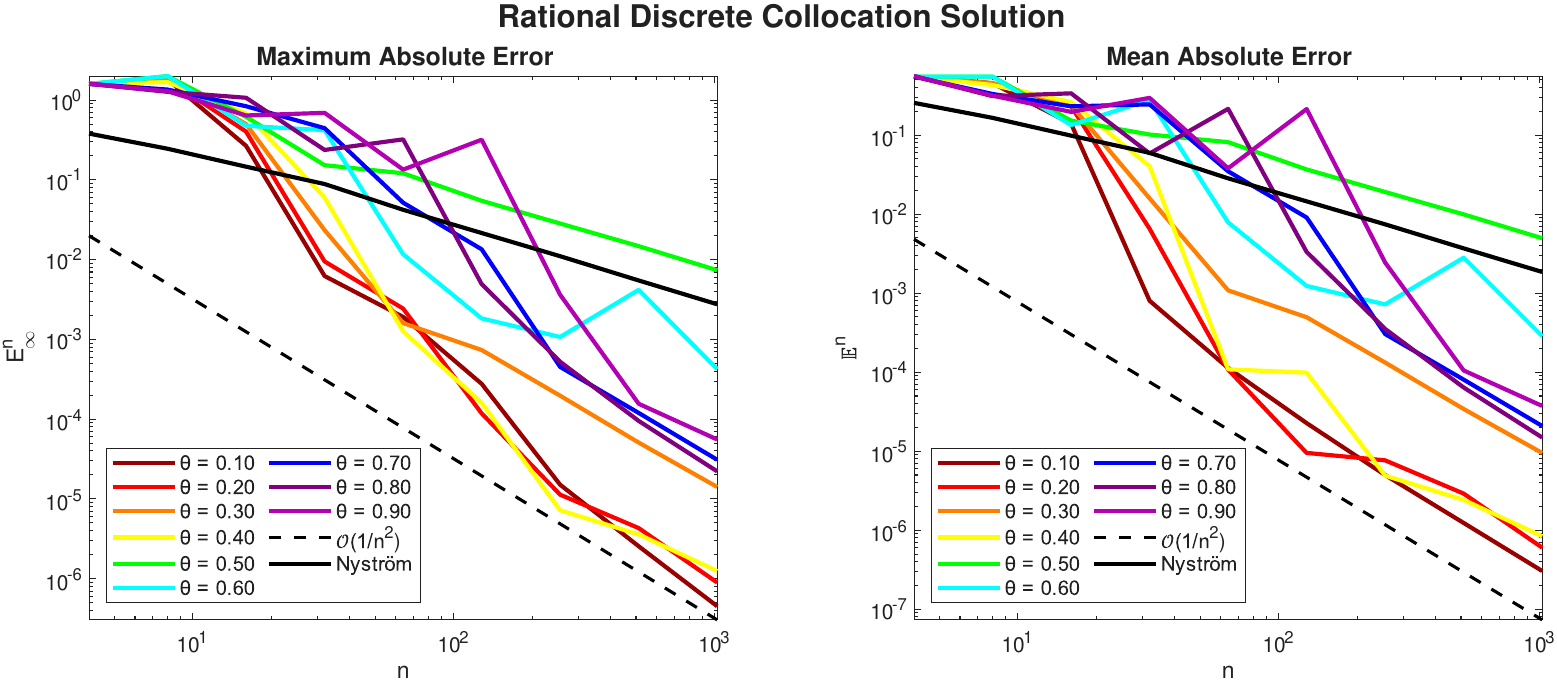}
    \caption{Experimental convergence of the RDC method for Test~\ref{test:Ex_0_12}.}
    \label{fig:Ar_Ex_12_Bis_Results_RDC}
\end{figure}
\begin{table}[htbp]
    \centering
    \caption{Test~\ref{test:Ex_0_12}: Experimental orders of convergence $p_\theta$  of RDC method 
    computed for different values of~$\theta$.}
    \label{tab:Ar_Ex_0_Ord_Convergence}
    \footnotesize
    \setlength{\tabcolsep}{3.4pt}
    \begin{tabular}{
    c
    *{9}{S[table-format=-1.2]}
    }
    \toprule
    {$n$}
    & {$\mathrm{RDC}_{0.10}$}
    & {$\mathrm{RDC}_{0.20}$}
    & {$\mathrm{RDC}_{0.30}$}
    & {$\mathrm{RDC}_{0.40}$}
    & {$\mathrm{RDC}_{0.50}$}
    & {$\mathrm{RDC}_{0.60}$}
    & {$\mathrm{RDC}_{0.70}$}
    & {$\mathrm{RDC}_{0.80}$}
    & {$\mathrm{RDC}_{0.90}$} \\
    \midrule
    4    & -0.19 & -0.19 & -0.19 & -0.08 & -0.32 & -0.32 &  0.23 & 0.33 & 0.31 \\
    8    &  2.77 &  2.18 &  1.86 &  1.31 &  1.70 &  2.07 &  0.70 & 0.25 & 1.01 \\
    16   &  5.42 &  5.41 &  4.42 &  3.51 &  2.01 &  0.18 &  0.91 & 2.17 &-0.11 \\
    32   &  1.70 &  1.96 &  3.87 &  5.57 &  0.34 &  5.17 &  3.10 &-0.44 & 2.36 \\
    64   &  2.79 &  4.37 &  1.12 &  2.98 &  1.15 &  2.67 &  1.94 & 6.03 &-1.25 \\
    128  &  4.20 &  3.38 &  1.90 &  4.47 &  0.95 &  0.77 &  4.90 & 3.24 & 6.46 \\
    256  &  2.56 &  1.39 &  1.95 &  1.00 &  0.94 & -1.96 &  1.92 & 2.46 & 4.54 \\
    512  &  2.50 &  2.27 &  1.85 &  1.53 &  1.00 &  3.30 &  1.95 & 2.10 & 1.49 \\
    1024 &  2.00 &  2.12 &  1.86 &  1.42 &  1.01 &  2.36 &  2.12 & 1.98 & 0.19 \\
    2048 &  2.00 &  1.93 &  1.90 &  1.39 &  1.00 &  0.50 &  2.04 & 1.98 & 2.10 \\
    4096 &  2.00 &  1.98 &  1.91 &  1.58 &  1.00 & -2.15 &  1.98 & 2.00 & 2.05 \\
    \midrule
    {$\mathbb{E}(\bar{\rho}_{\theta})$}
    & 2.52 & 2.44 & 2.04 & 2.24 & 0.98 & 1.14 & 1.98 & 2.01 & 1.74 \\
    \bottomrule
    \end{tabular}
\end{table}

Finally, to provide a quantitative comparison between the MN and RDC schemes, we consider the error ratio
\begin{equation*}
    \mathcal{R}_{n,\theta}^{\mathrm{mean}}
    =
    \frac{\mathbb{E}^n_{\mathrm{RDC}}}
         {\mathbb{E}^n_{\mathrm{MN}}},
\end{equation*}
where values smaller than one indicate a lower mean error for the RDC scheme. As reported in Table~\ref{tab:Ar_Ex_0_Log_Ratio_Mean}, the RDC method consistently outperforms the MN scheme, yielding error reductions of up to two orders of magnitude, particularly for larger values of~$\theta$. Since both methods rely on the same reconstruction operator (see~\eqref{fnm-MN} and~\eqref{ftilde}), these improvements can be attributed to the integral discretization strategy rather than to the reconstruction procedure.
\begin{table}[htbp]
    \centering
    \caption{Error ratio between the RDC and MN methods for Test~\ref{test:Ex_0_12}. Smaller values indicate a reduced mean error for the RDC method.}
    \label{tab:Ar_Ex_0_Log_Ratio_Mean}
    \footnotesize
    \setlength{\tabcolsep}{4pt}
    \begin{tabular}{c*{9}{l}}
    \toprule
    {$n$}
    & {$\theta=0.10$}
    & {$\theta=0.20$}
    & {$\theta=0.30$}
    & {$\theta=0.40$}
    & {$\theta=0.50$}
    & {$\theta=0.60$}
    & {$\theta=0.70$}
    & {$\theta=0.80$}
    & {$\theta=0.90$} \\
    \midrule
    4
    & $9.5\cdot10^{-1}$ & $1.0\cdot10^{0}$ & $1.4\cdot10^{0 \phantom{-}}$ & $1.3\cdot10^{-2}$ & $4.0\cdot10^{-3}$ & $1.6\cdot10^{-3}$ & $7.0\cdot10^{-4}$ & $3.0\cdot10^{-4}$ & $2.0\cdot10^{-4}$ \\
    8
    & $9.5\cdot10^{-1}$ & $1.0\cdot10^{0 \phantom{-}}$ & $2.3\cdot10^{0 \phantom{-}}$ & $1.1\cdot10^{-1}$ & $3.8\cdot10^{-3}$ & $7.0\cdot10^{-4}$ & $1.0\cdot10^{-3}$ & $8.0\cdot10^{-4}$ & $3.0\cdot10^{-4}$ \\
    16
    & $9.5\cdot10^{-1}$ & $1.0\cdot10^{0 \phantom{-}}$ & $2.4\cdot10^{0 \phantom{-}}$ & $2.7\cdot10^{-1}$ & $3.8\cdot10^{-2}$ & $3.4\cdot10^{-2}$ & $1.8\cdot10^{-2}$ & $9.3\cdot10^{-3}$ & $5.1\cdot10^{-3}$ \\
    32
    & $9.5\cdot10^{-1}$ & $9.5\cdot10^{-1}$ & $2.4\cdot10^{0 \phantom{-}}$ & $6.8\cdot10^{-1}$ & $3.8\cdot10^{-3}$ & $6.8\cdot10^{-3}$ & $7.0\cdot10^{-4}$ & $7.0\cdot10^{-4}$ & $5.0\cdot10^{-4}$ \\
    64
    & $9.5\cdot10^{-1}$ & $1.3\cdot10^{0 \phantom{-}}$ & $1.1\cdot10^{0 \phantom{-}}$ & $1.7\cdot10^{0 \phantom{-}}$ & $2.9\cdot10^{0 \phantom{-}}$ & $2.5\cdot10^{0 \phantom{-}}$ & $2.6\cdot10^{0 \phantom{-}}$ & $2.7\cdot10^{0 \phantom{-}}$ & $2.7\cdot10^{0 \phantom{-}}$ \\
    128
    & $9.5\cdot10^{-1}$ & $1.3\cdot10^{0 \phantom{-}}$ & $8.7\cdot10^{-1}$ & $4.6\cdot10^{0 \phantom{-}}$ & $2.8\cdot10^{-1}$ & $8.5\cdot10^{-2}$ & $9.7\cdot10^{-2}$ & $7.6\cdot10^{-1}$ & $1.5\cdot10^{-1}$ \\
    256
    & $9.5\cdot10^{-1}$ & $8.8\cdot10^{-1}$ & $1.3\cdot10^{0 \phantom{-}}$ & $4.1\cdot10^{0 \phantom{-}}$ & $1.2\cdot10^{0 \phantom{-}}$ & $6.2\cdot10^{-1}$ & $4.1\cdot10^{-2}$ & $2.2\cdot10^{-2}$ & $1.1\cdot10^{-2}$ \\
    512
    & $1.0\cdot10^{0 \phantom{-}}$ & $8.7\cdot10^{-1}$ & $1.7\cdot10^{0 \phantom{-}}$ & $8.3\cdot10^{-1}$ & $7.6\cdot10^{0 \phantom{-}}$ & $2.3\cdot10^{-1}$ & $4.8\cdot10^{-2}$ & $1.7\cdot10^{-2}$ & $8.0\cdot10^{-3}$ \\
    1024
    & $1.0\cdot10^{0 \phantom{-}}$ & $9.0\cdot10^{-1}$ & $9.2\cdot10^{-1}$ & $3.2\cdot10^{0 \phantom{-}}$ & $1.2\cdot10^{0 \phantom{-}}$ & $1.5\cdot10^{1}$ & $3.3\cdot10^{-1}$ & $2.9\cdot10^{-2}$ & $2.0\cdot10^{-2}$ \\
    \bottomrule
    \end{tabular}
\end{table}

\end{test}

\begin{test}\label{test:Ex_1} 
Our second example concerns the FIE 
of the form \eqref{FIE} with
\begin{equation}\label{eq:Ex_1_Details}
    \begin{split}
        &w(x)=1, \qquad \ \ \ g(x) = \left( \dfrac{13442827}{6750} + \dfrac{167827}{225}x^2 \right) \cdot 10^{-3},  
        \\
        & h(x,y)=\dfrac{(30x^2 + 1)|4y^2 - 1|}{360}  \cdot \left( \chi_{1}(y)-\chi_{2}(y)+2\chi_{3}(y)-\dfrac{1}{2}\chi_{4}(y)+\dfrac{1}{2}\chi_{5}(y) \right),
    \end{split}
\end{equation}
where $\chi_{i}(\cdot)$ denotes the characteristic function of the interval $I_i,$ $i=1,\dots,5,$ defined as follows
\begin{equation*}
    I_1=[-1,-3/5], \quad I_2=[-2/5,-1/5], \quad I_3=[-1/10,1/10], \quad I_4=[1/5,2/5], \quad I_5=[3/5,1]. 
\end{equation*}
In this setting, the exact solution 
is $f^*(x) = 2 + x^2.$ The uniform and mean errors associated with the different methods, computed according to \eqref{eq:Errors}, are summarized in Table~\ref{tab:Ar_Ex_1}. Further insight into the behavior of the approximation errors $\varepsilon^n(\zeta_l)$ is provided in Figure~\ref{fig:Ar_Ex_1_Results} for the representative cases $n=4$ and $n=1024$ .

\begin{figure}[htbp]
        \centering
        \includegraphics[width=0.8\linewidth]{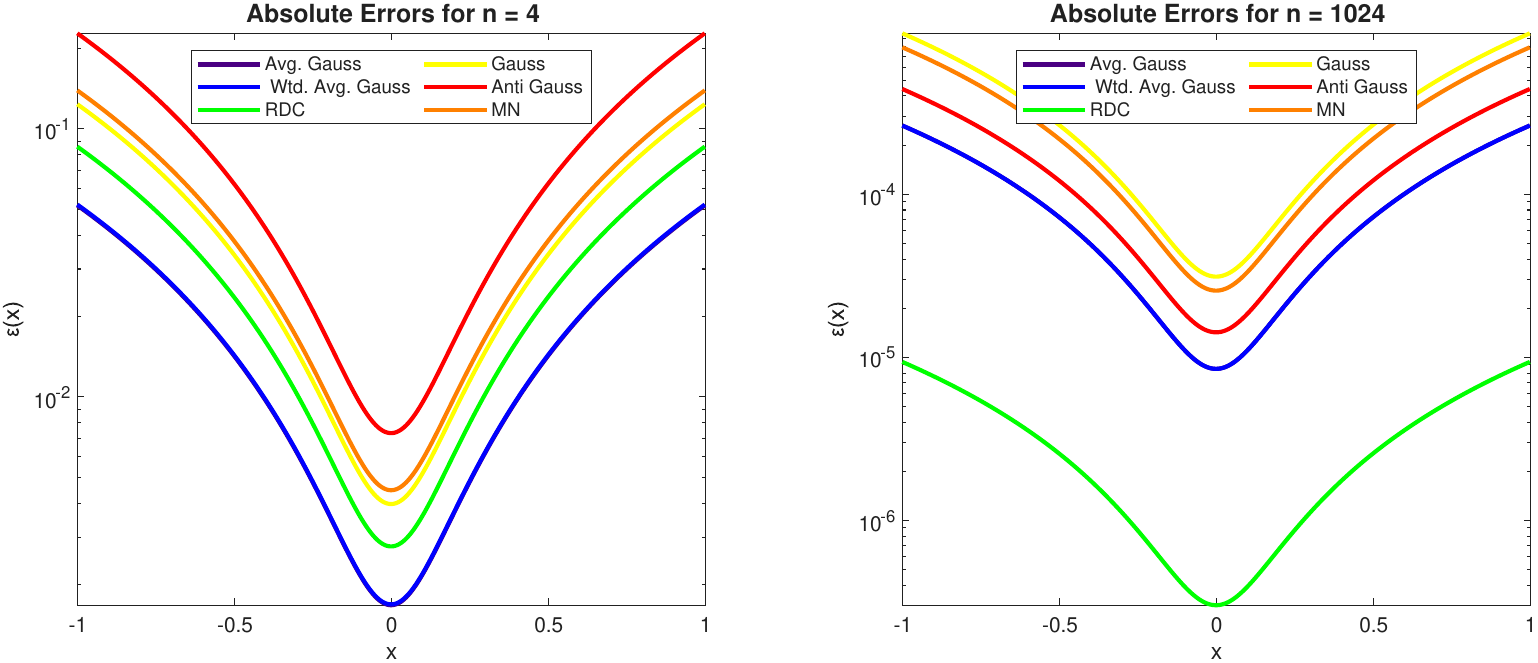}
        \caption{Test~\ref{test:Ex_1} : Pointwise absolute errors achieved for $n=4$ (left) and $n=1024$ (right) by the RDC and MN schemes, compared with Nystr\"om variants corresponding to different quadrature rules on the same number of nodes.}
      \label{fig:Ar_Ex_1_Results}
\end{figure}
    
\begin{table}[htbp]
    \centering
    \caption{Comparison of different methods applied to Test~\ref{test:Ex_1}.}
    \label{tab:Ar_Ex_1}
    \small
    \begin{tabular}{cc
    S[table-format=1.2e-1]
    S[table-format=1.2e-1]
    S[table-format=1.2e-1]
    S[table-format=1.2e-1]
    S[table-format=1.2e-1]
    S[table-format=1.2e-1]}
    \toprule
    {$n$} & {Quantity}
    & {RDC}
    & {MN}
    & {Gauss}
    & {Anti-Gauss}
    & {Avg. Gauss}
    & {Weighted Avg.}\\
    \midrule
    4
    & $E_\infty^n$
    & 8.58e-2 & 1.39e-01 & 1.24e-1 & 2.27e-1 & 5.18e-2 & 5.22e-2 \\
    & $\mathbb{E}^n$
    & 3.05e-2 & 4.95e-02 & 4.39e-2 & 8.08e-2 & 1.84e-2 & 1.86e-2  \\
    & $\kappa_\infty$
    & 1.21 & 1.26 & 1.17 & 1.42 & 1.35 & 1.20 \\
    \midrule
    16
    & $E_\infty^n$
    & 8.83e-3 & 7.38e-03 & 6.98e-2 & 8.62e-3 & 3.92e-2 & 3.92e-2 \\
    & $\mathbb{E}^n$
    & 3.14e-3 & 2.62e-03 & 2.48e-2 & 3.06e-3 & 1.39e-2 & 1.39e-2 \\
    & $\kappa_\infty$
    & 1.32 & 1.27 & 1.36 & 1.27 & 1.32 & 1.13\\
    \midrule
    64
    & $E_\infty^n$
    & 9.18e-4 & 1.71e-02 & 1.19e-4 & 1.73e-2 & 8.72e-3 & 8.72e-3 \\
    & $\mathbb{E}^n$
    & 3.26e-4 & 6.09e-03 & 4.22e-5 & 6.16e-3 & 3.10e-3 & 3.10e-3 \\
    & $\kappa_\infty$
    & 1.31 & 1.30 & 1.29 & 1.31 & 1.30 & 1.15 \\
    \midrule
    256
    & $E_\infty^n$
    & 2.04e-4 & 3.10e-03 & 3.92e-3 & 3.12e-3 & 3.97e-4 & 3.97e-4 \\
    & $\mathbb{E}^n$
    & 7.24e-5 & 1.10e-03 & 1.39e-3 & 1.11e-3 & 1.41e-4 & 1.41e-4 \\
    & $\kappa_\infty$
    & 1.30 & 1.30 & 1.29 & 1.30 & 1.30 & 1.14 \\
    \midrule
    1024
    & $E_\infty^n$
    & 9.37e-6 & 7.95e-04 & 9.68e-4 & 4.41e-4 & 2.64e-4 & 2.64e-4 \\
    & $\mathbb{E}^n$
    & 3.33e-6 & 2.83e-04 & 3.44e-4 & 1.57e-4 & 9.37e-5 & 9.37e-5 \\
    & $\kappa_\infty$
    & 1.30 & 1.30 & 1.30 & 1.29 & 1.30 & 1.14 \\
    \bottomrule
    \end{tabular}
\end{table}

\end{test}

\begin{test}\label{test:Ex_2} 
Here we address
the FIE \eqref{FIE} with 
\begin{equation}\label{eq:Ex_2_Details}
    \begin{split}
            & w(x)=1-x^2, \quad h(x,y)=\dfrac{W(x)W(y)}{4}, \quad \text{where} \quad  W(x) := \sum_{n=0}^{10} \dfrac{1}{2^n}\cos(5^n \pi x), \\
            & g(x)=(x+3)(x+2)^{-1} - \gamma W(x), \quad \text{where} \quad \gamma:=\int_{-1}^{1}\dfrac{(y^3+3y^2-y-3)W(y)}{4(y+2)} \ dy. 
    \end{split}
\end{equation}
The exact solution is $f^*(x)=(x+3)(x+2)^{-1}.$ In this case we remark that, although $h(x,y) \in C^\infty([-1,1]^2)$, the approximation of the solution 
is demanding due to the oscillatory behavior of the kernel. Nonetheless, the RDC method proves effective in this setting, leading to smoother solutions for small values of $n$ (see Figure~\ref{fig:Ar_Ex_2_Results}) and, more generally, to higher accuracy than that achieved by the other methods under consideration, as shown in Table~\ref{tab:Ar_Ex_2}.

\begin{figure}[htbp]
        \centering
        \includegraphics[width=0.8\linewidth]{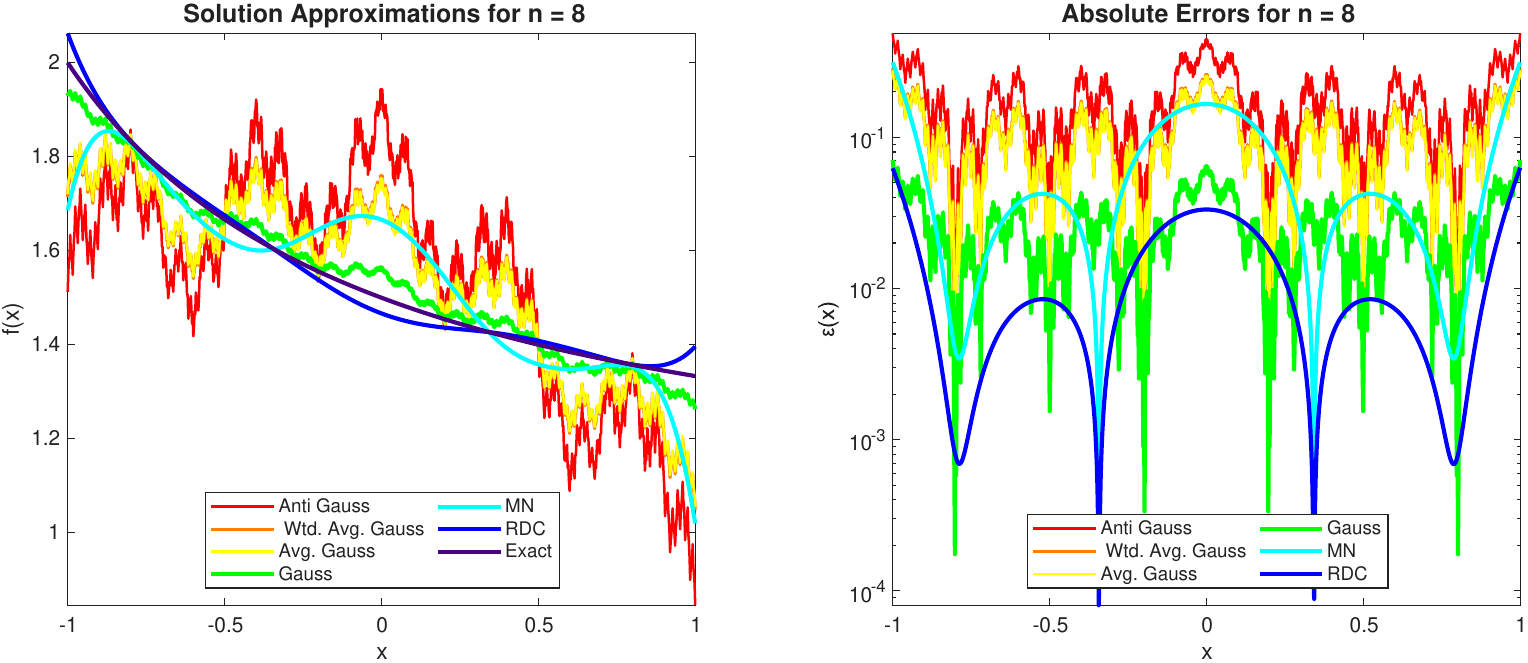}
        \caption{Approximation outcomes and errors for Test \ref{test:Ex_2} with $n=8.$ The RDC and MN schemes are compared with Nystr\"om variants based on different quadrature rules.}
      \label{fig:Ar_Ex_2_Results}
\end{figure}

\begin{table}[htbp]
    \centering
    \caption{Comparison of different methods applied to Test~\ref{test:Ex_2}.}
    \label{tab:Ar_Ex_2}
    \small
    \begin{tabular}{cc
    S[table-format=1.2e-1]
    S[table-format=1.2e-1]
    S[table-format=1.2e-1]
    S[table-format=1.2e-1]
    S[table-format=1.2e-1]
    S[table-format=1.2e-1]}
    \toprule
    {$n$} & {Quantity}
    & {RDC}
    & {MN}
    & {Gauss}
    & {Anti-Gauss}
    & {Avg. Gauss}
    & {Weighted Avg.} \\
    \midrule
    
    4 & $E_\infty^n$
    & 6.30e-2 & 1.14e+00 & 7.07e-2 & 4.88e-1 & 2.79e-1 & 2.86e-1 \\
    & $\mathbb{E}^n$
    & 1.30e-2 & 4.98e-01 & 2.51e-2 & 1.73e-1 & 9.91e-2 & 1.01e-1 \\
    & $\kappa_\infty$
    & 1.17 & 2.78 & 1.66 & 4.08 & 2.65 & 1.92 \\
    
    \midrule
    
    32 & $E_\infty^n$
    & 2.46e-3 & 1.64e-01 & 1.31e-1 & 1.19e-1 & 5.87e-3 & 6.28e-3 \\
    & $\mathbb{E}^n$
    & 8.73e-4 & 5.82e-02 & 4.64e-2 & 4.22e-2 & 2.09e-3 & 2.23e-3 \\
    & $\kappa_\infty$
    & 2.17 & 2.16 & 2.09 & 2.23 & 2.24 & 1.51 \\
    
    \midrule
    
    128 & $E_\infty^n$
    & 7.49e-4 & 1.43e-03 & 4.18e-3 & 2.94e-3 & 6.17e-4 & 6.17e-4 \\
    & $\mathbb{E}^n$
    & 2.54e-4 & 4.85e-04 & 1.48e-3 & 1.05e-3 & 2.19e-4 & 2.19e-4 \\
    & $\kappa_\infty$
    & 2.28 & 2.26 & 2.19 & 2.27 & 2.26 & 1.53 \\
    
    \midrule
    
    512 & $E_\infty^n$
    & 2.86e-4 & 2.79e-03 & 6.29e-4 & 9.34e-4 & 7.82e-4 & 7.82e-4 \\
    & $\mathbb{E}^n$
    & 1.02e-4 & 9.93e-04 & 2.23e-4 & 3.32e-4 & 2.77e-4 & 2.77e-4 \\
    & $\kappa_\infty$
    & 2.31 & 2.28 & 2.26 & 2.28 & 2.27 & 1.54 \\
    
    \midrule
    
    2048 & $E_\infty^n$
    & 3.50e-5 & 1.13e-03 & 1.65e-3 & 1.32e-3 & 1.48e-3 & 1.48e-3 \\
    & $\mathbb{E}^n$
    & 1.24e-5 & 4.01e-04 & 5.85e-4 & 4.69e-4 & 5.27e-4 & 5.27e-4 \\
    & $\kappa_\infty$
    & 2.29 & 2.28 & 2.28 & 2.28 & 2.28 & 1.54 \\
    
    \bottomrule
    \end{tabular}
\end{table}

\end{test}

\begin{test}\label{test:Ex_3} 
    Let us consider the following two-points boundary value problem \cite{Allouch_2024}
    \begin{equation}\label{eq:BVP}
       \begin{cases}
           f^{\prime \prime}(x)+ f(x)\left(1+\dfrac{\pi^2}{4}\right)-\cos\!\left(\dfrac{\pi x}{2}\right)=0, \qquad \qquad x\in (-1,1),\\
           f(-1)=f(1), \qquad \quad \qquad \! f^{\prime}(-1)=f^{\prime}(1),
       \end{cases}  
    \end{equation}
    which is equivalent to a FIE of the form \eqref{FIE} with Legendre weight ($\alpha=\beta=0$) and
    \begin{equation}\label{eq:Ex_3_Details}
        g(x)=\dfrac{1}{\pi^2} \left(2\cos\!\left(\frac{\pi x}{2}\right) + \pi x \sin\!\left(\frac{\pi x}{2}\right)\right), \qquad \qquad h(x,y)=-\dfrac{1}{\pi}\sin\!\left(\dfrac{\pi |x-y|}{2}\right).
    \end{equation}
   The exact solution at the boundary points takes the following values
    \begin{equation*}
        f^*(\pm 1) = -\dfrac{\pi}{\sqrt{4+\pi^2}} \, \cot\!\left(\dfrac{\sqrt{4+\pi^2}}{2}\right).
    \end{equation*}
    The absolute approximation errors $\varepsilon^n(\pm 1)$  obtained by the different methods  are plotted in Figure~\ref{fig:Ar_Ex_3_Results}. Additionally, the experimental convergence rates $\rho^n = \log_2\left(\varepsilon^{n/2}(\pm 1)/\varepsilon^{n}(\pm 1)\right)$ are reported in Table~\ref{tab:Ar_Ex_3}. The results confirm that all methods exhibit at least second-order convergence, with the RDC scheme yielding more accurate results.

    \begin{figure}[htbp]
        \centering
        \includegraphics[width=0.8\linewidth]{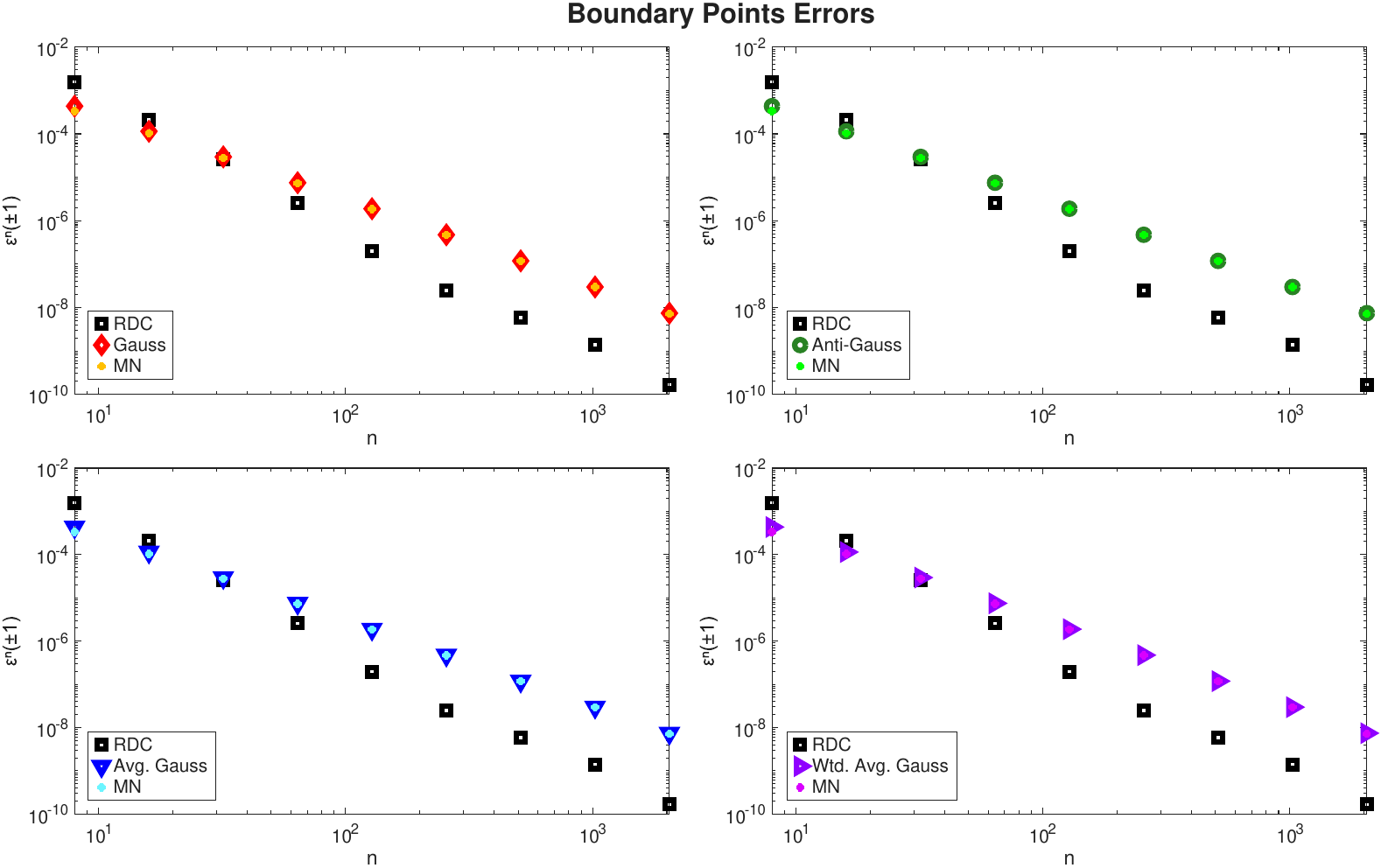}
        \caption{Absolute errors at the boundary nodes for Test~\ref{test:Ex_3}. The RDC and MN schemes are compared with Nystr\"om variants based on different quadrature rules.}
        \label{fig:Ar_Ex_3_Results}
    \end{figure}

    \begin{table}[htbp]
        \centering
        \caption{Comparison of different methods applied to Test~\ref{test:Ex_3}. Since the Nystr\"om methods based on the Gauss, Anti-Gauss, Avg. Gauss, and Weighted Avg. Gauss quadrature rules yield identical results, they are reported in a single column.}
        \label{tab:Ar_Ex_3}
        \small
        \begin{tabular}{cc
        S[table-format=1.2e-1]
        S[table-format=1.2e-1]
        S[table-format=1.2e-1]}
        \toprule
        {$n$} & {Quantity}
        & {RDC}
        & {MN}
        & {Nystr\"om variants} \\
        \midrule
        8
        & $\varepsilon^n(\pm1)$
        & 1.57e-3
        & 3.36e-4
        & 4.34e-4 \\
        & $\rho^n$
        & \multicolumn{1}{c}{---}
        & \multicolumn{1}{c}{---}
        & \multicolumn{1}{c}{---} \\
        \midrule
        16
        & $\varepsilon^n(\pm1)$
        & 2.09e-4
        & 1.03e-4
        & 1.15e-4 \\
        & $\rho^n$
        & 2.91
        & 1.70
        & 1.92 \\
        \midrule
        32
        & $\varepsilon^n(\pm1)$
        & 2.59e-5
        & 2.79e-5
        & 2.95e-5 \\
        & $\rho^n$
        & 3.01
        & 1.89
        & 1.96 \\
        \midrule
        64
        & $\varepsilon^n(\pm1)$
        & 2.57e-6
        & 7.26e-6
        & 7.48e-6 \\
        & $\rho^n$
        & 3.34
        & 1.94
        & 1.98 \\
        \midrule
        128
        & $\varepsilon^n(\pm1)$
        & 1.97e-7
        & 1.86e-6
        & 1.89e-6 \\
        & $\rho^n$
        & 3.70
        & 1.97
        & 1.99 \\
        \midrule
        256
        & $\varepsilon^n(\pm1)$
        & 2.46e-8
        & 4.69e-7
        & 4.73e-7 \\
        & $\rho^n$
        & 3.00
        & 1.98
        & 2.00 \\
        \midrule
        512
        & $\varepsilon^n(\pm1)$
        & 5.90e-9
        & 1.18e-7
        & 1.19e-7 \\
        & $\rho^n$
        & 2.06
        & 1.99
        & 2.00 \\
        \midrule
        1024
        & $\varepsilon^n(\pm1)$
        & 1.40e-9
        & 2.96e-8
        & 2.97e-8 \\
        & $\rho^n$
        & 2.07
        & 2.00
        & 2.00 \\
        \midrule
        2048
        & $\varepsilon^n(\pm1)$
        & 1.66e-10
        & 7.12e-9
        & 7.42e-9 \\
        & $\rho^n$
        & 3.07
        & 2.05
        & 2.00 \\
        \bottomrule
        \end{tabular}
    \end{table}

\end{test}

\section{Conclusions}\label{sec:conclusions}
For the numerical solution of Fredholm integral equations of the second kind, we developed a rational interpolation framework based on the rational projector $R_n^m$ recently introduced in \cite{Th-Barel}. Its application within the classical projection-collocation scheme naturally yields a Rational Collocation (RC) method that is stable, well conditioned and convergent, but requires the computation of the integral operator at the rational basis functions. By using de la Vall\'ee Poussin interpolation at the first kind Chebyshev zeros, we constructed a fully discrete method, called Rational Discrete Collocation (RDC) method, which maintains the  stability and convergence properties proved for RC method. As an alternative discretization strategy, we also considered Gaussian quadrature to approximate the integral operator obtaining a method, referred to as the Modified Nyström (MN) method, that exhibits the same linear system as the classical Nyström method but a different rational reconstruction of the solution. In the numerical experiments, both RDC and MN methods have been compared with the classical Nyström method (based on Gauss quadrature) and some of its recent variants (based on anti Gauss [], averaged Gauss [], and weighted averaged Gauss [] quadrature), addressing the challenging cases of discontinuous or highly oscillating integrand. It turns out that the approximate solution provided by MN method is almost comparable with that computed by the Nyström type methods, but it has the advantage of requiring only some sampling of the known terms of the equation. On the contrary, a superior performance has been observed for RDC method that consistently achieves higher accuracy than the other methods in the "difficult" cases we considered. 

Future work will investigate the use of the proposed rational projector in other numerical frameworks, including deferred correction methods \cite{SPIDeC_Mario} for differential problems and direct quadrature schemes for Volterra integral \cite{M_Volterra} and integro-differential \cite{M_Volterra_Integro_Diff} equations. Extensions of the present framework to  multidimensional integral equations \cite{Mirzaei_2021}, as well as to the non linear case, also deserve further investigation.

\section*{Acknowledgments}

This work has been partially supported by the Gruppo Nazionale Calcolo Scientifico-Istituto Nazionale di Alta Matematica (GNCS-INdAM) 
and by the PRIN 2022 PNRR no. P20229RMLB financed by the European Union - NextGeneration EU and by the Italian Ministry of University and Research (MUR).

This research has been accomplished within the RITA ``Research ITalian network on Approximation'', the UMI Group TAA ``Approximation Theory and Applications'', and the SIMAI Activity Group ANA$\&$A ``Numerical and Analytical Approximation of Data and Functions with Applications''.

The authors are grateful to Professor Luisa Fermo for generously sharing the codes used in the implementation of the Nystr{\"o}m variants.

\subsection*{Memberships}

All the  authors are members of Gruppo Nazionale Calcolo Scientifico-Istituto Nazionale di Alta Matematica (GNCS-INdAM),  UMI-TAA \textquotedblleft Approximation Theory and Applications'' Research Group, RITA \textquotedblleft Research
 ITalian network on Approximation'' and SIMAI Activity Group ANA$\&$A \textquotedblleft Numerical and Analytical Approximation of Data and Functions with Applications''.

\section*{Conflict of interest}
The authors have no conflicts of interest to declare relevant to this article's content.

\bibliographystyle{plainnat}
\bibliography{reference}

\end{document}